\theoremstyle{plain}
\theoremstyle{definition}
\theoremstyle{remark}
\numberwithin{equation}{section} 
\newtheorem{lem}{\textbf{Lemma}}[section]
\newtheorem{thm}[lem]{\textbf{Theorem}}
\newtheorem{co}[lem]{\textbf{Corollary}}
\newtheorem{prop}[lem]{\textbf{Proposition}}
\theoremstyle{definition}
\theoremstyle{definition}
\theoremstyle{remark}
\begin{document}
\title[An implicit algorithm  for  a  $Q$-nonexpansive mapping]{An implicit algorithm  for finding a   fixed point of a  $Q$-nonexpansive mapping in locally convex spaces} 

\author{Ebrahim  Soori,
M. R. Omidi,
Ali Farajzadeh, Yuanheng Wang }

\address{ {  Lorestan University, Department  of Mathematics,  P. O. Box 465, Khoramabad, Lorestan, Iran(E. Soori)\\
Department of Basic Sciences, Kermanshah University of Technology, Kermanshah, Iran(M. R. Omidi)\\
Department of Mathematics, Razi University, Kermanshah, Iran(Ali Farajzadeh)\\
Department of Mathematics, Zhejiang Normal university, Jinhua, China. This work was supported by the National Natural Science Foundation of China (Yuanheng Wang) (no. 11671365).}}

\email{  sori.e@lu.ac.ir, : m.omidi@kut.ac.ir,   A.Farajzadeh@razi.ac.ir,  yhwang@zjnu.cn }






\begin{abstract}
Suppose that  $Q$ is a   family of   seminorms on a             locally
convex space $E$ which determines the topology of $E$. In this paper, first  we define the  notation of the $q$-duality mappings in locally convex spaces.
Then we introduce     an implicit method for finding an element of
the set  of   fixed points
of   a $Q$-nonexpansive mapping. Then we prove the   convergence of the proposed implicit scheme to a   fixed point  of      the $Q$-nonexpansive mapping in $\tau_{Q}$.
\end{abstract}


\subjclass[2010]{47H10}

\keywords{ Fixed point, $Q$-Nonexpansive mappings, Duality mappings,  Seminorms,    $Q$-contraction mappings.}

\maketitle

\section{ Introduction}
Let $C$ be a nonempty closed and convex subset of a Banach space $E$ and
$E^{*}$ be the dual space of $E$. Let $\langle.,.\rangle$   denote the pairing between $E$ and $E^{*}$. The
normalized duality mapping $J: E \rightarrow E^{*}$
is defined by
\begin{align*}
    J(x)=\{f \in E^{*}: \langle x, f \rangle= \|x\|^{2}=\|f\|^{2} \}
\end{align*}
for all $x \in E$. In this investigation we study duality mappings for locally convex spaces that will be denoted by $J_{q}$ for a seminorm $q$.

Suppose that  $Q$ is a   family of   seminorms on a             locally
convex space $E$ which determines the topology of $E$ that will be denoted by  $\tau_{Q}$.
Let $C$ be a nonempty closed and convex subset of  $E$.  A mapping $ T$ of $ C $ into itself is called $Q$-nonexpansive if $q(Tx - Ty) \leq q(x - y)$, for all $x, y \in C$ and $q \in Q$, and a mapping $f$ is a $Q$-contraction on $E $ if  $ q(f (x) -f (y)) \leq \beta q(x - y)$,  for all $x, y \in E$  such that  $0 \leq \beta < 1$.

In this paper   we introduce the following general implicit algorithm for finding an   element of the set of    fixed points
of   a  $Q$-nonexpansive  mapping. On the other hand, our goal is to prove that there  exists   a sunny $Q$-nonexpansive retraction $ P $  of $ C $ onto $ \rm{Fix(T)}$ and $ x \in C $  such that the following  sequence  $\{z_{n}\}$     converges     to   $ Px $ in $\tau_{Q}$.
 \begin{equation*}
    z_{n}=\epsilon_{n} fz_{n}+(1-\epsilon_{n})Tz_{n}\quad ( n \in  I),
    \end{equation*}
where $f$ is a $Q$-contraction and $T$ is a $Q$-nonexpansive mapping. To receive to
  the  aim, some new concepts in locally convex spaces  will be devised.   For example,   some new results of   Hahn Banach theorem and   Banach contraction principle will be   generalized to    locally
convex spaces.

\section{Methods and preliminaries}

Suppose that  $Q$ is a   family of   seminorms on a             locally
convex space $E$ which determines the topology of $E$ that will be denoted by  $\tau_{Q}$.

Let $D$ be a subset of $B$ where $B$ is a subset of a  locally
convex space $E$ and let $P$ be a retraction of $B$ onto $D$, that is,
$Px = x$ for each $x \in D$. Then $P$ is said to be sunny, if for each $x \in B$ and $t\geq0$ with $Px + t (x - Px) \in B$,
$P(Px + t (x - Px)) = Px$.
A subset $D$ of $B$ is said to be a sunny  $Q$-nonexpansive retract of $B$ if there exists a sunny  $Q$-nonexpansive retraction $P$ of
$B$ onto $D$.
We know that for a Banach space  if $E$ is smooth and $P$ is a retraction of $B$ onto $D$, then $P$ is sunny and   nonexpansive if and
only if for each $x \in B$ and $z \in D$,
\begin{equation}\label{qpgfxm}
   \langle  x-Px , J(z-Px)\rangle    \leq 0.
\end{equation}
For more details, see  \cite{tn}. In this paper, after defining the notation of $q$-duality mappings,  we prove  inequality  \eqref{qpgfxm} for locally convex spaces then we will prove our implicit  algorithm.

   Recall the following definitions:
\begin{enumerate}
\item  The locally convex topology $\tau_{Q}$ is separated if and only if the family of seminorms
$Q$ possesses the following property:
for each  $x \in  E \backslash \{0\}$ there exists $q \in Q$ such that $q(x)  \neq 0$ or equivalently \\ $\displaystyle\bigcap_{q \in Q} \{ x \in E: q(x)=0\}= \{0\}$ ( see \cite{barbu}),
\item let $E$ be a locally convex topological vector space over $\mathbb{R}$
or $\mathbb{C}$. If $U \subset E$, then the polar of $U$, denoted by $U^{\circ}$, is the  set
$$\{f \in E^{*}: |f(x)| \leq 1, \forall x \in U\}.$$
\end{enumerate}

 Suppose that  $Q$ is a   family of   seminorms on a         locally convex space
 $X$ which determines the topology of $X$ and $q \in Q$ is a    seminorm. Let $Y$ be a    subset  of
 $X$, we put   $q^{*}_{Y}(f)=\sup\{|f(y)|: y \in Y, q(y)\leq 1\}$ and      $q^{*}(f)=\sup\{|f(x)|: x \in X, q(x)\leq 1\}$, for every linear functional $f$ on $X$. Observe that, for each $x \in X$ that $q(x) \neq 0$ and $f \in X^{*}$, then   $|\langle x , f \rangle | \leq q(x) q^{*}(f)$. For more details, see \cite{soori}.
 We will make use of the following Theorems.
\begin{thm}\cite{soori}\label{hahn1}
  Suppose that  $Q$ is a   family of   seminorms on a   real     locally
convex space $X$ which determines the topology of $X$ and $q \in Q$ is a continuous  seminorm and $Y$ is a  vector subspace of $X$ such that $Y \cap \{x \in X: q(x)=0\}=\{0\}$.     Let $f$ be a real    linear functional on $Y$ such that   $q^{*}_{Y}(f)< \infty$. Then there exists a continuous linear functional $h$ on $X$ that extends $f$ such that $q^{*}_{Y}(f)=q^{*}(h)$.
\end{thm}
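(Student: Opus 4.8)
The plan is to reduce the statement to the classical real Hahn--Banach theorem, using a scalar multiple of $q$ as the dominating sublinear functional, and then to read off both the continuity of the extension and the equality of dual seminorms from the resulting pointwise bound. Write $M := q^{*}_{Y}(f)$, which is finite by hypothesis. The first step is to convert the definition of $q^{*}_{Y}(f)$ into the homogeneous estimate
\begin{equation*}
|f(y)| \leq M\, q(y) \qquad (y \in Y).
\end{equation*}
For $y \in Y$ with $q(y) \neq 0$ this is immediate after normalizing by $q(y)$, since $q\bigl(y/q(y)\bigr) = 1$. The only point that uses the extra hypothesis is the case $q(y) = 0$: then $y \in Y \cap \{x \in X : q(x) = 0\} = \{0\}$, so $y = 0$ and the inequality holds trivially. (One could equally note that finiteness of $q^{*}_{Y}(f)$ already forces $f$ to vanish on $Y \cap \{q = 0\}$, since $q(ty)=0\le 1$ for all scalars $t$; the stated hypothesis makes this automatic by collapsing that intersection to $\{0\}$, so that $q$ is genuinely a norm on $Y$.)

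Next, set $p(x) := M\, q(x)$. Since $q$ is a seminorm and $M \geq 0$, the functional $p$ is a seminorm, hence a sublinear functional on the real space $X$, and the estimate above gives $f(y) \leq |f(y)| \leq p(y)$ for all $y \in Y$. The classical Hahn--Banach theorem then furnishes a linear functional $h$ on $X$ extending $f$ with $h(x) \leq p(x)$ for every $x \in X$. Applying this to $-x$ and using $p(-x) = p(x)$ upgrades the one-sided bound to
\begin{equation*}
|h(x)| \leq M\, q(x) \qquad (x \in X).
\end{equation*}
Because $q$ is continuous by assumption, this bound shows that $h$ is continuous at the origin, and linearity then yields $h \in X^{*}$.

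It remains to verify $q^{*}(h) = M$. Taking the supremum of $|h(x)| \leq M\, q(x)$ over $\{x \in X : q(x) \leq 1\}$ gives $q^{*}(h) \leq M = q^{*}_{Y}(f)$. For the reverse inequality I would use that $h$ agrees with $f$ on $Y$ together with the inclusion $\{y \in Y : q(y) \leq 1\} \subseteq \{x \in X : q(x) \leq 1\}$, which yields
\begin{equation*}
q^{*}(h) \geq \sup\{|h(y)| : y \in Y,\ q(y) \leq 1\} = \sup\{|f(y)| : y \in Y,\ q(y) \leq 1\} = q^{*}_{Y}(f) = M,
\end{equation*}
so equality holds. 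The argument is a direct application of Hahn--Banach, so there is no deep obstacle; the two places that merit explicit attention are the reduction in the first paragraph (where the hypothesis $Y \cap \{q = 0\} = \{0\}$ is exactly what guarantees that $M\,q$ dominates $f$ on all of $Y$, not merely modulo the kernel of $q$) and the symmetrization step, since Hahn--Banach delivers only the one-sided bound $h \leq p$ and one must pass to the absolute value to obtain both continuity and the dual-seminorm estimate.
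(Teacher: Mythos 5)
Your proof is correct and follows essentially the same route as the paper: dominate $f$ on $Y$ by the seminorm $p = q^{*}_{Y}(f)\,q$ (using the hypothesis $Y \cap \{q=0\} = \{0\}$ to handle the degenerate case), extend by Hahn--Banach, symmetrize to $|h| \leq p$, and compare the two suprema to get $q^{*}(h) = q^{*}_{Y}(f)$. The only cosmetic difference is that you invoke the algebraic Hahn--Banach theorem and deduce continuity of $h$ afterwards from the bound $|h| \leq q^{*}_{Y}(f)\,q$ with $q$ continuous, whereas the paper cites a version of Hahn--Banach for continuous seminorms that returns a continuous extension directly.
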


\begin{thm}\cite{soori}\label{hahn2}
   Suppose that  $Q$ is a   family of   seminorms on a   real     locally
convex space $X$ which determines the topology of $X$ and  $q \in Q$   a nonzero  continuous seminorm.    Let $x_{0}$ be a point in $X$.  Then there exists  a continuous linear functional on $X$ such that $q^{*}(f)=1$ and  $f(x_{0})=q(x_{0})$.
\end{thm}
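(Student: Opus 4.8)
The plan is to reduce the statement to a one-dimensional extension problem and then invoke Theorem \ref{hahn1}. The natural candidate is the span $Y=\mathbb{R}x_{0}$ together with the functional that already takes the prescribed value at $x_{0}$; but since Theorem \ref{hahn1} requires $Y\cap\{x\in X:q(x)=0\}=\{0\}$, I would split the argument into two cases according to whether $q(x_{0})$ vanishes.

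First I would treat the generic case $q(x_{0})\neq0$. Here $Y=\mathbb{R}x_{0}$ satisfies the hypothesis of Theorem \ref{hahn1}, since $q(tx_{0})=|t|\,q(x_{0})=0$ forces $t=0$. Define a linear functional $f_{0}\colon Y\to\mathbb{R}$ by $f_{0}(tx_{0})=t\,q(x_{0})$, so that $f_{0}(x_{0})=q(x_{0})$. A short computation then gives $q^{*}_{Y}(f_{0})=1$: for $y=tx_{0}$ with $q(y)=|t|\,q(x_{0})\leq1$ one has $|f_{0}(y)|=|t|\,q(x_{0})\leq1$, with equality attained at $|t|=1/q(x_{0})$. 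Since $q^{*}_{Y}(f_{0})=1<\infty$, Theorem \ref{hahn1} supplies a continuous linear extension $h$ of $f_{0}$ to $X$ with $q^{*}(h)=q^{*}_{Y}(f_{0})=1$. Setting $f=h$ yields $f(x_{0})=f_{0}(x_{0})=q(x_{0})$ and $q^{*}(f)=1$, as desired.

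The remaining case $q(x_{0})=0$ is where the only real subtlety lies, because then $Y=\mathbb{R}x_{0}$ is contained in the kernel of $q$ and Theorem \ref{hahn1} is not directly applicable. The key observation I would use is that mere finiteness of $q^{*}$ already forces a functional to annihilate $\{x\in X:q(x)=0\}$: if $q^{*}(g)<\infty$ and $q(x)=0$, then $q(tx)=0\leq1$ for every $t$, so $|t|\,|g(x)|=|g(tx)|\leq q^{*}(g)$ for all $t\in\mathbb{R}$, which forces $g(x)=0$. Consequently it suffices to produce any continuous $f$ with $q^{*}(f)=1$, for such an $f$ will automatically satisfy $f(x_{0})=0=q(x_{0})$. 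Since $q$ is nonzero, I would choose $w\in X$ with $q(w)\neq0$ and apply the construction of the first case with $w$ in place of $x_{0}$, obtaining a continuous functional $f$ with $q^{*}(f)=1$ and hence completing the proof.

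I expect the main (though mild) obstacle to be precisely this degenerate case $q(x_{0})=0$, where the direct span argument breaks down and one must instead exploit the fact that finiteness of $q^{*}$ enforces vanishing on the kernel of $q$; the nonzero-ness hypothesis on $q$ is exactly what permits the normalization $q^{*}(f)=1$.
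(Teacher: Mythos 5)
Your proof is correct. For the non-degenerate case $q(x_{0})\neq 0$ you follow exactly the paper's route (its Case 2): take $Y=\mathbb{R}x_{0}$, define $f_{0}(tx_{0})=t\,q(x_{0})$, check $q^{*}_{Y}(f_{0})=1$, and invoke Theorem \ref{hahn1}. Where you genuinely diverge is the degenerate case $q(x_{0})=0$. The paper handles it by an explicit construction: it shows $Y=\{y:q(y)=0\}$ is closed, picks $y_{0}\notin Y$, builds the functional $h(y+\alpha y_{0})=\alpha$ on $Z=Y+\mathbb{R}y_{0}$, bounds it via a Riesz-lemma-type estimate $r|\alpha|\leq q(y+\alpha y_{0})$, extends by Hahn--Banach, and normalizes by $q^{*}(L)$. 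You instead observe that finiteness of $q^{*}(g)$ already forces $g$ to annihilate the kernel of $q$ (since $q(tx)=0\leq 1$ for all $t$ gives $|t|\,|g(x)|\leq q^{*}(g)$), so \emph{any} continuous functional with $q^{*}(f)=1$ automatically satisfies $f(x_{0})=0=q(x_{0})$, and such a functional is supplied by running the non-degenerate case on any $w$ with $q(w)\neq 0$. Your version is shorter, avoids the separate closedness and distance estimates, and makes transparent why the degenerate case is vacuous once the generic case is settled; the paper's version has the minor merit of exhibiting the extension concretely on the subspace $Z$ containing $x_{0}$ itself. Both arguments are sound and deliver the same conclusion.
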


\section{Main result}

First we   define our  notation of the $q$-duality mapping: \\
Suppose that  $Q$ is a   family of   seminorms on a   real     locally
convex space $X$ which determines the topology of $X$, $q \in Q$ is a continuous  seminorm and   $X^{*}$ is the dual space of  $X$.     A multivalued mapping $J_{q} : X \rightarrow 2^{X^{*}}$
    defined by $$J_{q}x =\{ j \in X^{*}: \langle x , j\rangle ={q(x)}^{2}={q^{*}(j)}^{2} \},$$  is called      $q$-duality mapping. Obviously, $J_{q}(-x) = -J_{q}(x)$.      $J_{q}x \neq \emptyset$. Indeed,   let  $x \in X$, if $q(x)=0$, $j=0$ is in $J_{q}x$, and  in other words, if  $q(x)\neq 0$,   from Theorem \ref{hahn2}, there exists    a linear functional $f \in  {X^{*}}$ such that $q^{*}(f)=1$ and  $\langle x , f \rangle=q(x)$. Putting $j:=q(x)f$, we have $$\langle x , j\rangle =\langle x , q(x)f \rangle=q(x)\langle x ,  f \rangle={q(x)}^{2},$$ and we have also,
  \begin{align*}
   q^{*}(j)=&\sup\{|j(y)|: y \in X, q(y)\leq 1\}=\sup\{|q(x)f(y)|: y \in X, q(y)\leq 1\}\\=&q(x)\sup\{|f(y)|: y \in X, q(y)\leq 1\}=q(x)q^{*}(f)=q(x).
  \end{align*}

\begin{lem}\label{qupozm}
  Suppose that  $Q$ is a   family of   seminorms on a   Hausdorff and   complete   locally
convex space $E$ which determines the topology of $E$. Let  $\phi_{q} : E \rightarrow (- \infty,\infty]$ be a bounded below lower semicontinuous function for each $q \in Q$. Suppose that $\{x_{n}\}$ is a sequence in  $E$ such that
$q(x_{n}- x_{n+1}) \leq \phi_{q}(x_{n}) -\phi_{q}(x_{n+1})$ for all $n \in \mathbb{N}_{0}= \mathbb{N} \cup \{0\}$ and   $q \in Q$.
Then $\{x_{n}\}$ converges to a point $v \in E$ and  for each    $q \in Q$, $$q(x_{n}- v) \leq \phi_{q}(x_{n}) -\phi_{q}(v)$$ for all
$n \in \mathbb{N}_{0}$.
\end{lem}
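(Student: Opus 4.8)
The plan is to imitate the classical Caristi-type argument from Banach space theory, replacing the norm by each individual seminorm $q\in Q$ and interpreting convergence in the topology $\tau_{Q}$. First I would fix an arbitrary $q\in Q$ and note that, since $q(x_{n}-x_{n+1})\geq 0$, the hypothesis forces $\phi_{q}(x_{n+1})\leq\phi_{q}(x_{n})$; hence the real sequence $\{\phi_{q}(x_{n})\}$ is nonincreasing. Being also bounded below, it converges to some real number $L_{q}$.

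Next I would show that $\{x_{n}\}$ is Cauchy with respect to each seminorm. For $m>n$, the triangle inequality for $q$ combined with a telescoping estimate yields
\[
q(x_{n}-x_{m})\leq\sum_{k=n}^{m-1}q(x_{k}-x_{k+1})\leq\sum_{k=n}^{m-1}\bigl(\phi_{q}(x_{k})-\phi_{q}(x_{k+1})\bigr)=\phi_{q}(x_{n})-\phi_{q}(x_{m}).
\]
Since $\{\phi_{q}(x_{n})\}$ converges it is Cauchy, so for any prescribed $\varepsilon>0$ the right-hand side is less than $\varepsilon$ once $n$ is large. As this holds for every $q\in Q$ and $\tau_{Q}$ is generated by the whole family $Q$, the sequence $\{x_{n}\}$ is Cauchy in $\tau_{Q}$; because $E$ is complete, it converges to some $v\in E$.

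Finally, to obtain the stated inequality I would fix $n$ and $q$ and pass to the limit as $m\to\infty$ in $q(x_{n}-x_{m})\leq\phi_{q}(x_{n})-\phi_{q}(x_{m})$. The convergence $x_{m}\to v$ in $\tau_{Q}$ means $q(x_{m}-v)\to 0$, so the reverse triangle inequality $\lvert q(x_{n}-x_{m})-q(x_{n}-v)\rvert\leq q(x_{m}-v)$ gives $q(x_{n}-x_{m})\to q(x_{n}-v)$. Since $\phi_{q}$ is lower semicontinuous, $\phi_{q}(v)\leq\liminf_{m}\phi_{q}(x_{m})=L_{q}$. Combining these observations,
\[
q(x_{n}-v)=\lim_{m\to\infty}q(x_{n}-x_{m})\leq\phi_{q}(x_{n})-L_{q}\leq\phi_{q}(x_{n})-\phi_{q}(v),
\]
which is exactly the claim.

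I expect the only genuinely delicate point to be the last passage to the limit: one must combine the (reverse) triangle inequality for $q$ to move the limit inside the seminorm with the lower semicontinuity of $\phi_{q}$, which only supplies the inequality $\phi_{q}(v)\leq L_{q}$ rather than an equality, and then verify that this is exactly what is needed to downgrade $\phi_{q}(x_{n})-L_{q}$ to $\phi_{q}(x_{n})-\phi_{q}(v)$. The Cauchy step is handled seminorm-by-seminorm, but since completeness of $E$ refers to Cauchy nets (hence Cauchy sequences) in $\tau_{Q}$, establishing the Cauchy condition for each $q\in Q$ separately is enough to produce the $\tau_{Q}$-limit $v$.
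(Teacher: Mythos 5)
Your proposal is correct and follows essentially the same route as the paper's proof: a telescoping estimate $q(x_{n}-x_{m})\leq\phi_{q}(x_{n})-\phi_{q}(x_{m})$ to get the Cauchy condition seminorm-by-seminorm, completeness to produce $v$, and then continuity of $q$ together with lower semicontinuity of $\phi_{q}$ to pass to the limit in $m$. The only cosmetic difference is that the paper deduces the Cauchy property from summability of $\sum_{n}q(x_{n}-x_{n+1})$ rather than directly from the convergence of the monotone sequence $\{\phi_{q}(x_{n})\}$; both arguments are equivalent here.
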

\begin{proof}
Since  $q(x_{n}- x_{n+1}) \leq \phi_{q}(x_{n}) -\phi_{q}(x_{n+1})$, for each $n \in  \mathbb{N}_{0}$ and   $q \in Q$,
 then we have $\{\phi_{q}(x_{n})\}$ is a decreasing sequence for each     $q \in Q$. Moreover, for  $m \in  \mathbb{N}_{0}$,
\begin{align*}
  \sum_{n=0}^{m} q(x_{n}- x_{n+1})  &= q(x_{0}- x_{1}) + q(x_{1}- x_{2}) + \ldots + q(x_{m}- x_{m+1})\\ & \leq  \phi_{q}(x_{0}) - \phi_{q}(x_{m+1})
\\ & \leq  \phi_{q} (x_{0}) - \inf _{n\in \mathbb{N}_{0}}\phi_{q}(x_{n}).
 \end{align*}

Letting $m \rightarrow \infty$, we have
\begin{equation*}
   \sum_{n=0}^{\infty} q(x_{n}- x_{n+1})  < \infty,
\end{equation*}
for each    $q \in Q$, then $\displaystyle \lim_{n} q(x_{n}- x_{n+1})=0$ for each    $q \in Q$. This implies that $\{x_{n}\}$  is a left Cauchy sequence in $E$. Because $E$ is Hausdorff and complete,
there exists a unique  $v \in E$ such that $\displaystyle \lim _{n\rightarrow \infty}x_{n} = v$. Let $m, n  \in  \mathbb{N}_{0}$ with $m > n$. Then for each    $q \in Q$
\begin{align*}
  q(x_{n}- x_{m})& \leq \sum_{i=n}^{m-1} q(x_{i}- x_{i+1}) \\ & \leq \phi_{q}(x_{n})- \phi_{q}(x_{m}).
\end{align*}
Letting $m \rightarrow \infty$, since $\phi_{q}$ is  lower semicontinuous for  each    $q \in Q$ and   from  Theorem 1.4 in \cite{barbu} each $q \in Q$ is   continuous, then for each    $q \in Q$, we conclude that
\begin{equation*}
 q(x_{n}- v)   \leq \phi_{q}(x_{n})- \lim _{m \rightarrow \infty} \phi_{q}(x_{m}) \leq \phi_{q}(x_{n})-   \phi_{q}(v)
\end{equation*}
for all $n  \in  \mathbb{N}_{0}$.
\end{proof}

Now we state an extension of Banach Contraction Principle to   locally
convex spaces and we call  it  Banach $Q$-Contraction Principle.
\begin{thm}\label{contraction} (Banach $Q$-Contraction Principle)
  Suppose that  $Q$ is a   family of   seminorms on a    separated  and   complete        locally
convex space $E$ which determines the topology of $E$
  and $T : E \rightarrow E$ a $Q$-contraction mapping with Lipschitz constant
$k \in (0, 1)$. Then we have the following:\\
\begin{enumerate}
  \item [(a)] There exists a unique fixed point $v \in E$.
   \item [(b)]For arbitrary $x_{0} \in E$, the Picard iteration process defined by
   \begin{equation*}
     x_{n+1} = T({x_{n}}), n \in \mathbb{N}_{0},
   \end{equation*}
converges to $v$.
\item[(c)] $q(x_{n}- v)\leq (1 - k)^{-1} k^{n}q(x_{0}- x_{1})$ for all $n \in \mathbb{N}_{0}$ and  $q \in Q$.
\end{enumerate}
\end{thm}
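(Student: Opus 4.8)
The plan is to reduce everything to the preceding Lemma \ref{qupozm} by manufacturing, for each seminorm $q \in Q$, a suitable Lyapunov-type potential along the Picard orbit. First I would fix $x_{0} \in E$ and form the iterates $x_{n+1} = T(x_{n})$. Using the $Q$-contraction hypothesis $q(Tx - Ty) \leq k\, q(x-y)$ together with a one-line induction, I would record the geometric decay
\begin{equation*}
q(x_{n} - x_{n+1}) = q(Tx_{n-1} - Tx_{n}) \leq k\, q(x_{n-1} - x_{n}) \leq \cdots \leq k^{n} q(x_{0} - x_{1}),
\end{equation*}
valid for every $n \in \mathbb{N}_{0}$ and every $q \in Q$.

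The key device is to set $\phi_{q}(x) := (1-k)^{-1} q(x - Tx)$ for each $q \in Q$. I would first check that this function meets the two standing hypotheses of Lemma \ref{qupozm}: it is bounded below, since $q \geq 0$ forces $\phi_{q} \geq 0$, and it is lower semicontinuous, because each $q$ is continuous (Theorem 1.4 in \cite{barbu}) and $T$ is $\tau_{Q}$-continuous---the latter being immediate from the contraction estimate, as $q(x_{\alpha} - x) \to 0$ yields $q(Tx_{\alpha} - Tx) \to 0$. The crucial computation is the telescoping inequality: evaluating $\phi_{q}$ at two consecutive iterates gives
\begin{equation*}
\phi_{q}(x_{n}) - \phi_{q}(x_{n+1}) = (1-k)^{-1}\bigl[q(x_{n} - x_{n+1}) - q(x_{n+1} - x_{n+2})\bigr] \geq (1-k)^{-1}(1-k)\, q(x_{n} - x_{n+1}) = q(x_{n} - x_{n+1}),
\end{equation*}
where the middle inequality uses $q(x_{n+1} - x_{n+2}) \leq k\, q(x_{n} - x_{n+1})$. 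Thus the hypothesis $q(x_{n} - x_{n+1}) \leq \phi_{q}(x_{n}) - \phi_{q}(x_{n+1})$ of Lemma \ref{qupozm} holds, and the lemma delivers a point $v \in E$ with $x_{n} \to v$ in $\tau_{Q}$---this is part (b)---together with the bound $q(x_{n} - v) \leq \phi_{q}(x_{n}) - \phi_{q}(v) \leq \phi_{q}(x_{n}) = (1-k)^{-1} q(x_{n} - x_{n+1})$. Combining this with the geometric decay recorded above yields part (c).

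It then remains to establish part (a). To see that $v$ is a fixed point, I would pass to the limit in $x_{n+1} = T(x_{n})$: continuity of $T$ gives $T(x_{n}) \to T(v)$, while $x_{n+1} \to v$, so uniqueness of limits in the separated (Hausdorff) space $E$ forces $Tv = v$. For uniqueness, if $w$ is another fixed point then for every $q \in Q$ one has $q(v - w) = q(Tv - Tw) \leq k\, q(v - w)$, whence $q(v - w) = 0$; the separation property $\bigcap_{q \in Q}\{x : q(x) = 0\} = \{0\}$ then gives $v = w$.

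I expect the only genuinely non-routine step to be the choice of the potential $\phi_{q}(x) = (1-k)^{-1} q(x - Tx)$, since once it is in hand the telescoping identity and the appeal to Lemma \ref{qupozm} are essentially mechanical. The remaining care is simply to invoke continuity of $T$ and of each $q$ (for the lower semicontinuity of $\phi_{q}$ and for the passage to the limit in the fixed-point equation) and the separation of $E$ (for uniqueness of both the limit and the fixed point).
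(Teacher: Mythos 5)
Your proposal is correct and follows essentially the same route as the paper: the identical potential $\phi_{q}(x) = (1-k)^{-1}q(x - Tx)$, the same telescoping inequality reducing everything to Lemma \ref{qupozm}, and the same continuity and separation arguments for existence and uniqueness of the fixed point. The only cosmetic difference is that you obtain part (c) by chaining $q(x_{n}-v) \leq \phi_{q}(x_{n}) = (1-k)^{-1}q(x_{n}-x_{n+1}) \leq (1-k)^{-1}k^{n}q(x_{0}-x_{1})$, whereas the paper phrases the same estimate as $\phi_{q}(x_{n}) \leq k^{n}\phi_{q}(x_{0})$.
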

\begin{proof}
(a) For each $q \in Q$, let   $\phi_{q} : E \rightarrow \mathbb{R^{+}}$ be a functions defined  by $$\displaystyle \phi_{q}(x) = (1 - k)^{-1} q(x- Tx),$$
for each $x \in E$. From  Theorem 1.4 in \cite{barbu}, since  each  $q$ is   continuous, $\phi_{q}$ is also a continuous function.
 From  the fact  that $T$ is a $Q$-contraction mapping, for each $q \in Q$ we have
\begin{equation}\label{solp}
  q(Tx-T^{2}x) \leq kq(x- Tx), x \in E,
\end{equation}
which conclude  that
\begin{align*}
 q(x- Tx) - kq(x- Tx) \leq q(x- Tx) - q(Tx-T^{2}x).
\end{align*}
Hence
\begin{align*}
q(x- Tx) & \leq \frac{1}{1 - k}[q(x- Tx) - q(Tx-T^{2}x)] \\ & \leq \frac{1}{1 - k}[q(x- Tx) - q(Tx-T^{2}x)] ,
\end{align*}
for each $q \in Q$, therefore
\begin{align}\label{bvgfj}
q(x- Tx) \leq  \phi_{q}(x)- \phi_{q}(Tx).
\end{align}
Consider   an arbitrary element $x$   in $X$ and define the sequence ${x_{n}}$ in $E$ by
$x_{n} = T^{n} x, n \in \mathbb{N}$.
From \eqref{bvgfj}, we have
\begin{equation*}
  q(x_{n}- x_{n+1}) \leq \phi_{q}(x_{n}) - \phi_{q}(x_{n+1}), n\in  \mathbb{N},
\end{equation*}
for  each    $q \in Q$  and since $E$ is Hausdorff, it follows from Lemma \ref{qupozm} that there exists an element  $v \in E$ such that
\begin{equation*}
  \lim_{n\rightarrow \infty}x_{n} = v,
\end{equation*}
and
\begin{equation}\label{gjcdl}
  q(x_{n}- v) \leq \phi_{q}(x_{n}), n\in  \mathbb{N}_{0},
\end{equation}
 for  each    $q \in Q$.
Since for example, from page 3 in \cite{barbu}, $T$ is continuous and $x_{n+1} = Tx_{n}$, it follows that $v = Tv$. Suppose that $z$ is
another fixed point for $T$. If for each $q \in Q$ we have $q(v- z)=0$, then from the fact that $E$ is separated, we have $v=z$.  On the other hands,  let  $ 0 < q(v- z)$ for  some     $q \in Q$, then  we have
\begin{equation*}
  0 < q(v- z) = q(Tv-Tz) \leq kq(v- z) <q(v- z),
\end{equation*}
that is a contradiction. Hence $T$ has unique fixed point $v \in E$.

 (b) This assertion follows from part (a).

 (c) From \eqref{solp} we have that $\phi_{q}(x_{n}) \leq k^{n}\phi_{q}(x_{0})$ for  each    $q \in Q$. This implies from \eqref{gjcdl} that
$q(x_{n}- v) \leq k^{n}\phi_{q}(x_{0})$ for  each    $q \in Q$.
\end{proof}

\begin{lem}\label{jcfiol}
Let $E$ be a locally convex space. Then for $x, y \in E$ with $q(x) \neq 0$, the following are equivalent:
\begin{enumerate}
  \item [(a)]  $ q(x) \leq q(x + ty)$ for all $t > 0$ that $q(x + ty) \neq 0$ and $q \in Q$.
  \item [(b)] There exists $j_{q} \in J_{q}x$ such that $\langle y, j_{q} \rangle \geq 0$ for each $q \in Q$.
\end{enumerate}
Proof. (a) $\Rightarrow$ (b).
For $t > 0$, let $f_{t} \in J_{q}(x+ty)$ and define $g_{t} =\frac{f_{t}}{q^{*}({f_{t})}}$. Hence
$q^{*}({g_{t})} = 1$. Since  $g_{t}   \in q^{*}({f_{t})}^{-1} J_{q}(x+ty)$ and $q^{*}({g_{t})} = 1$, we have
\begin{align}\label{bsegk}
 q(x) \leq q(x + ty)& = q^{*}({f_{t})}^{-1} \langle x + ty , f_{t} \rangle \nonumber\\&
 =\langle x + ty , g_{t} \rangle =\langle x   , g_{t} \rangle +t\langle y , g_{t} \rangle \nonumber \\&
\leq q(x) + t\langle y, g_{t}\rangle.
\end{align}
By   Theorem 3.26 in \cite{sob}, the Banach-Alaoglu theorem(which states that the polar $U^{\circ}$ is
weak$^{*}$ly-compact for every   neighborhood of zero in $E$). Putting $U=\{x \in E: q(x)\leq 1\}$, we have $\{g_{t}\} \subset U^{\circ}$ hence, the net $\{g_{t}\}$ has a limit point $g \in E^{*}$ such that
$q^{*}(g) \leq 1$ and from \eqref{bsegk} we have that   $\langle x, g \rangle \geq q(x)$ and $\langle y, g \rangle\geq 0$.
Observe that
\begin{align*}
 q(x)\leq \langle x, g \rangle \leq q(x)q^{*}(g) \leq q(x),
\end{align*}
which gives that
$\langle x, g\rangle = q(x)$ and $q^{*}(g) = 1$.
Set $j_{q} = gq(x)$, then $j_{q} \in J_{q}x$ and $\langle y, j_{q} \rangle \geq 0$.

(b) $\Rightarrow$ (a). Suppose for $x, y  \in X$ with $q(x) \neq 0$ and $q \in Q$, there exists $j_{q} \in  J_{q}x$ such that
$\langle y, j_{q} \rangle \geq 0$. Hence for $t > 0$ that $q(x + ty) \neq 0$,
\begin{equation*}
  q(x)^{2} = \langle x, j_{q} \rangle \leq \langle x, j_{q} \rangle + \langle ty, j_{q} \rangle= \langle x + ty, j_{q} \rangle \leq q(x + ty)q(x),
\end{equation*}
which implies that
$q(x) \leq  q(x + ty)$.
\end{lem}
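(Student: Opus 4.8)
The plan is to treat this as the locally convex analogue of the classical Banach-space characterization relating the duality map to the condition $\|x\|\le\|x+ty\|$, and to prove the two implications separately for a fixed continuous seminorm $q\in Q$ with $q(x)\neq 0$; since both $J_{q}$ and the inequality in (a) refer to the same $q$, the quantifier ``for each $q$'' is handled pointwise. Throughout I would lean on two facts already available: the pointwise estimate $|\langle z,f\rangle|\le q(z)\,q^{*}(f)$, and the defining identity $\langle x,j\rangle=q(x)^{2}=q^{*}(j)^{2}$ for $j\in J_{q}x$, which in particular gives $q^{*}(j)=q(x)$.

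For the implication (b) $\Rightarrow$ (a), which is purely algebraic, I would start from $j_{q}\in J_{q}x$ with $\langle y,j_{q}\rangle\ge 0$ and write, for $t>0$ with $q(x+ty)\neq 0$,
\begin{equation*}
q(x)^{2}=\langle x,j_{q}\rangle\le\langle x+ty,j_{q}\rangle\le q(x+ty)\,q^{*}(j_{q})=q(x+ty)\,q(x),
\end{equation*}
using $\langle ty,j_{q}\rangle\ge 0$ in the first inequality and $q^{*}(j_{q})=q(x)$ in the last; dividing by $q(x)\neq 0$ yields (a).

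The substance lies in (a) $\Rightarrow$ (b). For each $t>0$ I would pick $f_{t}\in J_{q}(x+ty)$ and normalize $g_{t}=f_{t}/q^{*}(f_{t})$, so that $q^{*}(g_{t})=1$; since $q^{*}(f_{t})=q(x+ty)$, this gives $\langle x+ty,g_{t}\rangle=q(x+ty)$. Combining with (a) and the bound $\langle x,g_{t}\rangle\le q(x)q^{*}(g_t)=q(x)$ produces
\begin{equation*}
q(x)\le\langle x+ty,g_{t}\rangle=\langle x,g_{t}\rangle+t\langle y,g_{t}\rangle\le q(x)+t\langle y,g_{t}\rangle,
\end{equation*}
whence $\langle y,g_{t}\rangle\ge 0$ for every $t>0$. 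To take $t\to 0^{+}$ I would invoke the Banach--Alaoglu theorem: with $U=\{z\in E:q(z)\le 1\}$, the polar $U^{\circ}$ is weak$^{*}$-compact, and $\{g_{t}\}\subset U^{\circ}$ because $q^{*}(g_{t})=1$. Hence the net admits a weak$^{*}$-cluster point $g$ with $q^{*}(g)\le 1$.

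The main obstacle, and the step needing genuine care, is checking that both inequalities survive the passage to the weak$^{*}$-limit along a subnet with $t\to 0^{+}$. Along such a subnet $\langle y,g_{t}\rangle\to\langle y,g\rangle$, so $\langle y,g\rangle\ge 0$; and since $|\langle y,g_{t}\rangle|\le q(y)$ is bounded while $t\to 0$, the term $t\langle y,g_{t}\rangle$ vanishes, so the inequality $q(x)\le\langle x,g_{t}\rangle+t\langle y,g_{t}\rangle$ yields $q(x)\le\langle x,g\rangle$ in the limit. The sandwich
\begin{equation*}
q(x)\le\langle x,g\rangle\le q(x)\,q^{*}(g)\le q(x)
\end{equation*}
then forces $\langle x,g\rangle=q(x)$ and $q^{*}(g)=1$. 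Setting $j_{q}=q(x)\,g$ gives $\langle x,j_{q}\rangle=q(x)^{2}$ and $q^{*}(j_{q})=q(x)$, so that $j_{q}\in J_{q}x$, while $\langle y,j_{q}\rangle=q(x)\langle y,g\rangle\ge 0$, which completes the argument.
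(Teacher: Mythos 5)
Your proposal is correct and follows essentially the same route as the paper: normalize $f_{t}\in J_{q}(x+ty)$ to $g_{t}$ with $q^{*}(g_{t})=1$, derive $q(x)\le\langle x,g_{t}\rangle+t\langle y,g_{t}\rangle$ from (a), extract a weak$^{*}$-cluster point via Banach--Alaoglu applied to the polar of $\{z:q(z)\le 1\}$, and conclude with the same sandwich argument and the same algebraic computation for (b)~$\Rightarrow$~(a). Your explicit justification that $t\langle y,g_{t}\rangle\to 0$ as $t\to 0^{+}$ (using $|\langle y,g_{t}\rangle|\le q(y)$) is a welcome clarification of a step the paper leaves implicit.
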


\begin{thm}\label{sunny}
   Let $C$ be a nonempty convex subset of a separated  locally convex space $X$ and $D$ a nonempty subset of $C$.  Let $J_{q}: E \rightarrow E^{*}$ be single valued   for every $q \in Q$.  Let $ (C-C) \cap \{ x, q(x)=0\}= \{0\}$. If $P$ is a retraction of $C$ onto $D$ such
that for each $q \in Q$,
\begin{equation}\label{hFHFL}
  \langle x - Px , J_{q}(y - Px) \rangle \leq 0, \quad( x \in C, y \in D),
\end{equation}
then $P$ is sunny $Q$-nonexpansive. Conversely, if  $P$ is sunny $Q$-nonexpansive and $ (D-D) \cap \{ x, q(x)=0\}= \{0\}$, then \eqref{hFHFL} holds.
\end{thm}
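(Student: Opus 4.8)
The plan is to transplant the classical smooth Banach space characterization of sunny nonexpansive retractions (in the spirit of Reich and Bruck) to the present seminorm setting, working one seminorm $q \in Q$ at a time and exploiting the oddness and single-valuedness of $J_{q}$ together with the extraction Lemma \ref{jcfiol}. Throughout I would repeatedly use that for a single-valued $J_{q}$ one has $\langle u, J_{q}u\rangle = q(u)^{2}$ and $q^{*}(J_{q}u) = q(u)$, the estimate $|\langle z, J_{q}u\rangle| \le q(z)\,q^{*}(J_{q}u) = q(z)\,q(u)$, and the identity $J_{q}(-u) = -J_{q}(u)$.

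For the forward direction I would first prove $Q$-nonexpansiveness. Fix $x, y \in C$, put $u = Px - Py$, and apply \eqref{hFHFL} to the admissible pairs $(x, Py)$ and $(y, Px)$ (legitimate since $Px, Py \in D$). Using $J_{q}(Py - Px) = -J_{q}(u)$, the first inequality becomes $\langle x - Px, J_{q}u\rangle \ge 0$ and the second is $\langle y - Py, J_{q}u\rangle \le 0$; subtracting and simplifying $(x-Px)-(y-Py) = (x-y) - u$ yields
\[
 q(u)^{2} = \langle u, J_{q}u\rangle \le \langle x - y, J_{q}u\rangle \le q(x-y)\,q(u),
\]
so that $q(Px - Py) \le q(x - y)$, the case $q(u)=0$ being trivial. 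For sunniness, fix $x \in C$ and $t \ge 0$ with $w := Px + t(x - Px) \in C$, and set $a = Px$, $b = Pw$. Applying \eqref{hFHFL} to $(w, Px)$ and writing $w - b = (a-b) + t(x-a)$ gives $q(a-b)^{2} + t\langle x - a, J_{q}(a-b)\rangle \le 0$; applying it to $(x, Pw)$ and using oddness gives $\langle x - a, J_{q}(a-b)\rangle \ge 0$. Since $t \ge 0$, these combine to $q(a-b)^{2} \le 0$, hence $q(Pw - Px) = 0$ for every $q \in Q$; because $X$ is separated this forces $Pw = Px$, i.e. $P$ is sunny.

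For the converse, assume $P$ is sunny $Q$-nonexpansive and fix $x \in C$, $y \in D$. If $q(Px - y) = 0$ then $q(y - Px) = 0$, so $J_{q}(y - Px) = 0$ and \eqref{hFHFL} holds trivially. Otherwise, for small $t \in (0,1)$ the point $u_{t} := Px + t(x - Px) = (1-t)Px + t x$ lies in $C$ by convexity, so sunniness gives $Pu_{t} = Px$ while $Q$-nonexpansiveness together with $Py = y$ gives
\[
 q(Px - y) = q(Pu_{t} - Py) \le q(u_{t} - y) = q\bigl((Px - y) + t(x - Px)\bigr).
\]
Thus condition (a) of Lemma \ref{jcfiol} holds for $v := Px - y$ in the direction $x - Px$, and the lemma produces $\langle x - Px, J_{q}(Px - y)\rangle \ge 0$; applying oddness once more converts this into $\langle x - Px, J_{q}(y - Px)\rangle \le 0$, which is \eqref{hFHFL}.

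The main obstacle, and the place demanding care, is the bookkeeping forced by the possibly degenerate seminorms: every step must be guarded against $q(\cdot) = 0$, where $J_{q}$ collapses to $0$ and the strict estimates degenerate, and the passages between $J_{q}(u)$ and $J_{q}(-u)$ rely essentially on the single-valuedness hypothesis. A secondary subtlety is matching the range of $t$: the displayed inequality in the converse is available only for $t \in (0,1]$, beyond which $u_{t}$ may leave $C$, so I would note that the proof of Lemma \ref{jcfiol} uses only arbitrarily small $t$ and that this restricted range therefore suffices. Finally, the conclusion $Pw = Px$ in the sunny step uses separatedness of $X$, equivalently $\bigcap_{q \in Q} \{x : q(x) = 0\} = \{0\}$, to upgrade the statement that $q(Pw - Px) = 0$ for every $q$ into genuine equality; the hypotheses $(C-C)\cap\{x: q(x)=0\} = \{0\}$ and $(D-D)\cap\{x:q(x)=0\}=\{0\}$ play the analogous role whenever one wishes to deduce equality of two points from a single seminorm.
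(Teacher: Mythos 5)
Your proposal is correct and follows essentially the same route as the paper's own proof: the forward direction applies \eqref{hFHFL} at the two points $(x,Py)$ and $(y,Px)$ (resp.\ $(w,Px)$ and $(x,Pw)$) and combines the resulting inequalities via the oddness of $J_{q}$, and the converse reduces, exactly as in the paper, to condition (a) of Lemma \ref{jcfiol} applied to $Px-y$ in the direction $x-Px$. If anything, your write-up is slightly more careful than the paper's on two minor points — the explicit treatment of the degenerate case $q(y-Px)=0$ and the observation that only arbitrarily small $t$ is needed in Lemma \ref{jcfiol} since $u_{t}$ may leave $C$ for large $t$ — but the underlying argument is the same.
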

\begin{proof}
First we show $P$ is sunny: For $x \in C$, put  $x_{t} := Px + t(x - Px)$ for each $t > 0$. Since
$C$ is convex, we conclude that $x_{t} \in C$ for each $t \in (0, 1]$. Hence, from \eqref{hFHFL}, we have
\begin{equation}\label{mvhjllhf}
  \langle x - Px , J_{q}(Px - Px_{t})\rangle \geq 0 \; \text{and}\; \langle x_{t} - Px_{t}, J_{q}(Px_{t} -Px)\rangle \geq 0.
\end{equation}
Because $x_{t}- Px = t(x -Px)$ and $\langle t(x - Px), J_{q}(Px - Px_{t})\rangle  \geq 0$, we have
\begin{equation}\label{glyljl}
  \langle x_{t} - Px , J_{q}(Px - Px_{t})\rangle \geq 0.
\end{equation}
Combining \eqref{mvhjllhf} and \eqref{glyljl}, we have
\begin{align*}
  q(Px - Px_{t})^{2}& = \langle Px - x_{t} + x_{t} - Px_{t} , J_{q}(Px - Px_{t})\rangle
\\ &=  - \langle x_{t}- Px , J_{q}(Px - Px_{t})\rangle + \langle x_{t} - Px_{t}, J_{q}(Px -Px_{t})\rangle
\leq 0.
\end{align*}
Then  $q(Px - Px_{t})=0$. Thus from the fact that $E$ is separated, $Px = Px_{t}$. Therefore, $P$ is sunny.

Now, we show that $P$ is $Q$-nonexpansive: For $x, z \in C$, we have from \eqref{hFHFL} that
$\langle x - Px , J_{q}(Px- Pz) \rangle \geq 0$ and $\langle z - Pz , J_{q}(Pz - Px)\rangle \geq 0$, for each  $q \in Q$.
Hence
$\langle x- z - (Px - Pz) , J_{q}(Px- Pz)\rangle \geq 0$.
Hence,  we conclude  that
$\langle x - z, J_{q}(Px - Pz) \rangle \geq q(Px - Pz)^{2}$ for each  $q \in Q$.
  Therefore, since $ (C-C) \cap \{ x, q(x)=0\}= \{0\}$, it follows that $P$ is $Q$-nonexpansive.

Conversely, suppose that  $P$ is  the sunny $Q$-nonexpansive   retraction and $x \in C$.
Then $Px \in D$ and there exists a point  $z \in D$ such that $Px = z$. Putting  $M :=
\{z + t(x - z) : t \geq 0\}$ we conclude  $M$ is a nonempty convex set. Since $P$ is sunny, i.e., $Pv = z$,  for each  $v \in M$ we have
\begin{align*}
  q(y - z )&= q(Py - Pv)\\&
\leq q(y - v) = q(y - z + t(z - x))
\end{align*}
 for all $y \in D$.
Hence,  from Lemma \ref{jcfiol}, we have
\begin{equation*}
  \langle x - Px , J_{q}(y - Px) \rangle \leq 0, \quad( x \in C, y \in D).
\end{equation*}
\end{proof}

To prove Theorem \ref{lmt}, we need to prove  the following tree corollaries  of Theorem 6.5.3 in \cite{tn}.
\begin{co}\label{waml}
   Suppose that  $Q$ is a   family of   seminorms on a       locally
convex space $E$ which determines the topology of $E$. Let $K$ be a convex subset of    $E$. Let  $x \in E$ and  $x_{0}\in K$ such that $(E-\{x\})\cap \{y \in E: q(y)=0\}=\{0\}$, for each $q \in Q$. Then the following are equivalent, for each $q \in Q$:
\begin{enumerate}
  \item $q(x_{0}-x)=\inf\{ q(x-y): y \in K\}$;
  \item  there exists an $L \in E^{*}$ such that $q^{*}(L)=1$ and
  \begin{equation*}
    \inf\{ L(y-x): y \in K\}= q(x_{0}-x);
  \end{equation*}
  \item   there exists an $L \in E^{*}$ such that $q^{*}(L)=1$ and
  \begin{equation*}
    \inf\{ L(y): y \in K\}= L(x_{0}),
  \end{equation*}
  and
  \begin{equation*}
    L(x_{0}-x)=q(x_{0}-x).
  \end{equation*}
\end{enumerate}
\end{co}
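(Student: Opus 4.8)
The plan is to prove the three conditions equivalent by the cycle $(1)\Rightarrow(3)\Rightarrow(2)\Rightarrow(1)$, with the only substantive step being $(1)\Rightarrow(3)$. Throughout I write $d:=q(x_{0}-x)$, recall that by Theorem 1.4 in \cite{barbu} every $q\in Q$ is continuous, and use the pointwise bound $|L(z)|\le q(z)$ valid for any $L$ with $q^{*}(L)=1$ (for $q(z)=0$ one lets $t\to\infty$ in $t\,|L(z)|=|L(tz)|\le q^{*}(L)$ to force $L(z)=0$).

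The two easy arrows are direct. For $(3)\Rightarrow(2)$ one has $\inf\{L(y-x):y\in K\}=\inf\{L(y):y\in K\}-L(x)=L(x_{0})-L(x)=L(x_{0}-x)=q(x_{0}-x)$. For $(2)\Rightarrow(1)$, fix $y\in K$; since $q^{*}(L)=1$, $q(x-y)=q(y-x)\ge L(y-x)\ge\inf\{L(y'-x):y'\in K\}=q(x_{0}-x)$, so $q(x_{0}-x)$ is a lower bound for $\{q(x-y):y\in K\}$, and as $x_{0}\in K$ it is attained, giving (1).

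The heart is $(1)\Rightarrow(3)$, and I would argue it first for $d>0$ by separating the open convex $q$-ball $B:=\{z\in E:q(z-x)<d\}$ from the convex set $K$. These are disjoint because $q(y-x)\ge d$ for all $y\in K$, and $B$ is open (by continuity of $q$) and nonempty (it contains $x$); the geometric Hahn--Banach separation theorem then yields a nonzero continuous $L_{0}\in E^{*}$ with $\sup_{b\in B}L_{0}(b)\le\inf_{k\in K}L_{0}(k)$. Evaluating the left side through $\sup\{L_{0}(u):q(u)<d\}=d\,q^{*}(L_{0})$ gives $L_{0}(x)+d\,q^{*}(L_{0})\le\inf_{k\in K}L_{0}(k)\le L_{0}(x_{0})$. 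After checking $q^{*}(L_{0})>0$ (else $L_{0}\equiv0$), I normalize $L:=L_{0}/q^{*}(L_{0})$, so $q^{*}(L)=1$ and $L(x)+d\le\inf_{k\in K}L(k)\le L(x_{0})$. Since $L(x_{0}-x)\le q^{*}(L)\,q(x_{0}-x)=d$, both the inequalities collapse to equalities $L(x_{0}-x)=d=q(x_{0}-x)$ and $\inf_{k\in K}L(k)=L(x)+d=L(x_{0})$, which is precisely (3). The same $L$ may alternatively be extracted from Lemma \ref{jcfiol} applied to $w=x_{0}-x$ with admissible directions $v=y-x_{0}$ ($y\in K$), since convexity gives $q(w+tv)\ge q(w)$ for $t\in(0,1]$, which is all that the proof of that lemma uses.

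The main obstacle is the degeneracy of the seminorm, which surfaces in two places. First, the normalization requires both $q^{*}(L_{0})\neq0$ and the identity $\sup\{L_{0}(u):q(u)<d\}=d\,q^{*}(L_{0})$; neither may simply be imported from the normed theory, and each must be read off from the definition of $q^{*}$. Second, the case $d=0$ has to be handled separately using the standing hypothesis $(E-\{x\})\cap\{y:q(y)=0\}=\{0\}$, which forces $x_{0}-x=0$, so that (3) reduces to the existence of a $q^{*}$-normalized functional supporting $K$ at $x_{0}=x$, again supplied by separation. A cleaner way to absorb all of this is to pass to the quotient $E/N_{q}$, where $N_{q}=\{y\in E:q(y)=0\}$ is closed and $\|[z]\|:=q(z)$ is a genuine norm; the three conditions transport verbatim to the images of $x$, $x_{0}$ and $K$, and the statement becomes exactly the normed-space best-approximation characterization of Theorem 6.5.3 in \cite{tn}, pulled back through the quotient map.
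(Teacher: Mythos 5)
Your argument takes a genuinely different route from the paper's. The paper disposes of this corollary essentially in one line, by invoking Theorem 6.5.3 of \cite{tn} --- a normed-space result --- and only supplementing it with a check that the functional produced there satisfies $q^{*}(L)=1$ and is continuous; how the Banach-space theorem is supposed to apply in the presence of a possibly degenerate seminorm is left implicit. You instead give a self-contained proof: the easy arrows $(3)\Rightarrow(2)\Rightarrow(1)$ are handled directly, and the substantive implication $(1)\Rightarrow(3)$ is obtained by separating the open convex set $B=\{z:q(z-x)<d\}$ from $K$ by the geometric Hahn--Banach theorem and normalizing. The computation $\sup\{L_{0}(u):q(u)<d\}=d\,q^{*}(L_{0})$, the check that $q^{*}(L_{0})\in(0,\infty)$, and the collapse of the two inequalities $L(x)+d\le\inf_{K}L\le L(x_{0})$ and $L(x_{0}-x)\le d$ into equalities are all correct, and this is more informative than the paper's proof because it makes visible exactly where the degeneracy of $q$ enters. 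Your closing remark that everything can be routed through the quotient of $E$ by the null space $\{y:q(y)=0\}$ is precisely the justification the paper omits for its appeal to the normed-space theorem. One caveat on a side remark: Lemma \ref{jcfiol} as stated produces a functional $j_{q}$ depending on the single direction $y-x_{0}$, so it does not by itself yield one $L$ working for all $y\in K$ simultaneously; the separation argument is the one to keep.

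The one place where something genuinely fails is the degenerate case $d=q(x_{0}-x)=0$, and the failure lies in the statement itself rather than in your main argument. If $d=0$, the standing hypothesis forces $x_{0}=x\in K$, and condition (1) is then automatic, whereas conditions (2) and (3) demand a functional with $q^{*}(L)=1$ whose infimum over $K$ is attained at $x_{0}$. When $x_{0}$ is an interior point of $K$ (take $K$ the closed unit ball of a normed space and $x=x_{0}=0$), no such $L$ exists, since a nonzero continuous linear functional cannot attain its minimum over a convex set at an interior point. So your proposed fix --- ``again supplied by separation'' --- cannot work there: there is nothing to separate. The equivalence is therefore correct only under the additional hypothesis $q(x_{0}-x)>0$; this is consistent with the paper's own later use of the corollary in Corollary \ref{ircbem}, where the case $q(x_{0}-x)=0$ is quietly treated separately and trivially. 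You should state this restriction explicitly rather than claim the degenerate case is covered.
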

\begin{proof}
 By Theorem 6.5.3 in \cite{tn}, the corollary holds. We just need to show $q^{*}(L)=1$ and $L$ is continuous  in the assertion (1)$\Rightarrow$ (2). We know that, for each $\epsilon > 0$ there exists a point $y_{0} \in K$ such that
 \begin{equation*}
   q(y_{0}-x) \leq   q(x_{0}-x)+\epsilon\leq L(y_{0}-x),
 \end{equation*}
 then by our assumption, we have
 \begin{equation*}
   1\leq L(\frac{y_{0}-x}{q(y_{0}-x)})\leq q^{*}(L)\leq 1,
 \end{equation*}
 hence $q^{*}(L)=1$. To show the continuity of $L$, let $x_{\alpha}$  be a net in $E$ such that $x_{\alpha}\rightarrow z_{0}$, then
 \begin{equation*}
   \langle x_{\alpha}- z_{0}, L\rangle \leq q(x_{\alpha}-z_{0}),
 \end{equation*}
 then  $L$ is continuous.
\end{proof}
\begin{co}\label{ircbem}
Suppose that  $Q$ is a   family of   seminorms on a   real    locally
convex space $E$ which determines the topology of $E$. Let $K$ be a convex subset of    $E$. Let  $x \in E$ and  $x_{0}\in K$ such that $(E-\{x\})\cap \{y \in E: q(y)=0\}=\{0\}$, for each $q \in Q$. Then the following are equivalent:
\begin{enumerate}
  \item $q(x_{0}-x)=\inf\{ q(x-y): y \in K\}$, for each $q \in Q$;
  \item  there exists an $f \in J_{q}(x-x_{0})$ such that
  \begin{equation*}
    \langle x_{0}-y, f\rangle \geq 0,
  \end{equation*}
   for every $y \in K$ for each $q \in Q$.
\end{enumerate}

\end{co}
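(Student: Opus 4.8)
The plan is to deduce this from Corollary \ref{waml} by rescaling the supporting functional produced there so that it lands in the duality set $J_{q}(x-x_{0})$. I fix $q \in Q$ and prove the equivalence for this single $q$; since distinct seminorms are not coupled, the ``for each $q$'' formulations follow at once. First I would dispose of the degenerate case $q(x-x_{0})=0$: here $x_{0} \in K$ forces $\inf\{q(x-y): y \in K\} \leq q(x-x_{0})=0$, so (1) holds automatically, and simultaneously $0 \in J_{q}(x-x_{0})$, because $\langle x-x_{0}, 0\rangle = 0 = q(x-x_{0})^{2}=q^{*}(0)^{2}$, while $\langle x_{0}-y, 0\rangle = 0 \geq 0$, so (2) holds as well. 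Thus both statements are true in this case, and I may assume $q(x-x_{0}) \neq 0$ henceforth.

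For (1) $\Rightarrow$ (2) I would invoke Corollary \ref{waml}, whose hypothesis is identical and whose statement (1) coincides with (1) here; this makes its statement (3) available, furnishing $L \in E^{*}$ with $q^{*}(L)=1$, $L(x_{0}-x)=q(x_{0}-x)$, and $\inf\{L(y): y \in K\}=L(x_{0})$. The key step is the normalization $f := -q(x-x_{0})\,L$. Using $L(x-x_{0})=-q(x_{0}-x)=-q(x-x_{0})$ one checks that $\langle x-x_{0}, f\rangle = q(x-x_{0})^{2}$ and $q^{*}(f)=q(x-x_{0})\,q^{*}(L)=q(x-x_{0})$, so that $f \in J_{q}(x-x_{0})$. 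For the required sign, since $L(x_{0})$ is the infimum of $L$ over $K$ and $y \in K$, one has $L(x_{0})-L(y) \leq 0$, whence $\langle x_{0}-y, f\rangle = -q(x-x_{0})\bigl(L(x_{0})-L(y)\bigr) \geq 0$.

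For the converse (2) $\Rightarrow$ (1) I would argue directly from the definition of $J_{q}$ together with the elementary estimate $|\langle z, f\rangle| \leq q(z)\,q^{*}(f)$ recorded just before Theorem \ref{hahn1}. Given $f \in J_{q}(x-x_{0})$ with $\langle x_{0}-y, f\rangle \geq 0$, for every $y \in K$ I split $q(x-x_{0})^{2}=\langle x-x_{0}, f\rangle = \langle x-y, f\rangle + \langle y-x_{0}, f\rangle$ and discard the nonpositive term $\langle y-x_{0}, f\rangle$ to obtain $q(x-x_{0})^{2} \leq \langle x-y, f\rangle \leq q(x-y)\,q^{*}(f)=q(x-y)\,q(x-x_{0})$. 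Dividing by $q(x-x_{0}) \neq 0$ gives $q(x-x_{0}) \leq q(x-y)$ for all $y \in K$, and since $x_{0}\in K$ the reverse inequality is immediate, yielding (1).

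The step I expect to be the main obstacle is the simultaneous matching of both defining equalities of $J_{q}$ under the rescaling, i.e.\ choosing the single scalar $-q(x-x_{0})$ so that $\langle x-x_{0}, f\rangle$ equals $q(x-x_{0})^{2}$ \emph{and} $q^{*}(f)$ equals $q(x-x_{0})$ at once; the accompanying sign bookkeeping (that multiplying the nonpositive quantity $L(x_{0})-L(y)$ by a negative scalar restores the required $\geq 0$) is where a careless computation would go astray. A minor point worth confirming is that the estimate $|\langle z, f\rangle| \leq q(z)\,q^{*}(f)$ remains valid when $q(x-y)=0$, since then it forces $\langle x-y, f\rangle = 0$ and the chain of inequalities still closes.
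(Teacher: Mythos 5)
Your proposal is correct and follows essentially the same route as the paper: both directions rest on rescaling the norming functional from Corollary \ref{waml} by $-q(x-x_{0})$ to land in $J_{q}(x-x_{0})$, and the converse uses the same splitting $\langle x-x_{0},f\rangle=\langle x-y,f\rangle+\langle y-x_{0},f\rangle$ with the bound $|\langle z,f\rangle|\leq q(z)q^{*}(f)$. The only cosmetic differences are that you treat the degenerate case $q(x-x_{0})=0$ up front for both implications (the paper does so only in the converse) and that you use form (3) of Corollary \ref{waml} where the paper uses form (2); these are declared equivalent there, so nothing changes.
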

\begin{proof}
 (1)$\Rightarrow$ (2): Let $q(x_{0}-x)=\inf\{ q(x-y): y \in K\}$, for each $q \in Q$. Then from theorem \ref{waml},  there exists an $L \in E^{*}$ such that $q^{*}(L)=1$ and
  \begin{equation*}
    \inf\{ L(y-x): y \in K\}= q(x_{0}-x).
  \end{equation*}
  Set $f=-q(x_{0}-x)L$. Therefore, since  $q(x_{0}-x)\leq L(x_{0}-x)\leq q(x_{0}-x)$, we have
  \begin{equation*}
    f(x-x_{0})=-q(x_{0}-x)L(x-x_{0})=q^{2}(x_{0}-x)= {q^{*}}^{2}(f).
  \end{equation*}
  Then $f \in J_{q}(x-x_{0})$. Hence, we have, for every $y \in K$,
  \begin{align*}
     f(x_{0}-y)  &=  f(x_{0}-x+x-y)=- q^{2}(x_{0}-x)-q(x_{0}-x)L(x-y)\\ & = - q^{2}(x_{0}-x)+q(x_{0}-x)L(y-x) \\ &  \geq - q^{2}(x_{0}-x)+q^{2}(x_{0}-x)=0.
  \end{align*}
  (2)$\Rightarrow$ (1): If $q(x_{0}-x)=0$ then (1) holds. Hence,  assume that $q(x_{0}-x) \neq 0$. Therefore, we have
  \begin{align*}
      q^{2}(x_{0}-x) &=  \langle x-x_{0} , f \rangle= \langle x-y + y-x_{0} , f \rangle
\\ & \leq  q(x-y)q^{*}(f)+\langle y-x_{0} , f \rangle \leq q(x-y)q^{*}(f)=q(x-y)q(x_{0}-x),
  \end{align*}
  hence we have
  \begin{equation*}
   q(x_{0}-x) \leq q(x-y),
  \end{equation*}
 for all $y \in K$. Then, we have
  $q(x_{0}-x)=\inf\{ q(x-y): y \in K\}$, for each $q \in Q$.
\end{proof}
\begin{co}\label{trsfpl}
Suppose that  $Q$ is a   family of   seminorms on a       locally
convex space $E$ which determines the topology of $E$.  Suppose that $C$ is a nonempty closed convex   subset of $E$. Let $C_{0}\subseteq C$ and $P$ be a sunny  $Q$-nonexpansive retraction of $C$ onto $C_{0}$. Let   $J_{q}$ be single valued duality mapping for each $q \in Q$.
 Then for any $x \in C$ and $y \in C_{0}$,
\begin{equation*}
  \langle x - Px , J_{q}(y - Px) \rangle \leq 0.
\end{equation*}
\end{co}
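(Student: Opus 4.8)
The plan is to reduce the statement to the one–sided variational inequality of Lemma \ref{jcfiol}, in the same spirit as the converse part of Theorem \ref{sunny}, but carried out without any separation hypothesis by treating the degenerate seminorm values by hand. So I would fix $x \in C$, $y \in C_{0}$ and $q \in Q$, write $z := Px \in C_{0}$, and aim to produce $j \in J_{q}(y-z)$ with $\langle z-x, j\rangle \geq 0$; single-valuedness of $J_{q}$ then forces $j = J_{q}(y-Px)$ and gives $\langle x-Px, J_{q}(y-Px)\rangle \leq 0$.

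First I would dispose of the degenerate case $q(y-z)=0$. Here the definition of $J_{q}$ forces $q^{*}(J_{q}(y-z))^{2} = q(y-z)^{2} = 0$, so $q^{*}(J_{q}(y-z)) = 0$, and the basic bound $|\langle w,f\rangle| \leq q(w)\,q^{*}(f)$ yields $|\langle x-z, J_{q}(y-z)\rangle| \leq q(x-z)\,q^{*}(J_{q}(y-z)) = 0$. Thus the desired inequality holds with equality, and this step is exactly what takes the place of the hypothesis that $E$ be separated in Theorem \ref{sunny}.

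For the main case $q(y-z)\neq 0$ I would apply Lemma \ref{jcfiol} with its variable ``$x$'' equal to $y-z$ and its variable ``$y$'' equal to $z-x$, so that conclusion (b) reads $\langle z-x, J_{q}(y-z)\rangle \geq 0$. The hypothesis (a) to be checked rests on the identity $(y-z)+t(z-x) = y - v_{t}$, where $v_{t} := z + t(x-z) = (1-t)z + tx$. Since $x,z \in C$ and $C$ is convex, $v_{t} \in C$ for $t \in (0,1]$, and the sunny property gives $Pv_{t} = P(Px + t(x-Px)) = Px = z$. Because $Py=y$ and $P$ is $Q$-nonexpansive,
\[
  q(y-z) = q(Py - Pv_{t}) \leq q(y - v_{t}) = q\bigl((y-z)+t(z-x)\bigr),
\]
which is precisely condition (a). Lemma \ref{jcfiol} then furnishes the required functional, and single-valuedness of $J_{q}$ closes the argument.

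The hard part will be the hypothesis (a) of Lemma \ref{jcfiol}: it is stated for all $t>0$, whereas the sunny property is only available on the part of the ray $\{z+t(x-z)\}$ that lies in $C$. Convexity of $C$ secures this for $t\in(0,1]$, and since the limit-point extraction in the proof of Lemma \ref{jcfiol} only uses information as $t\to 0^{+}$, this range is enough; this is the same tacit reduction already used in the converse direction of Theorem \ref{sunny}. The other place where care is needed is the sign bookkeeping in $(y-z)+t(z-x)=y-v_{t}$, so I would verify the direction of the displacement before invoking the lemma.
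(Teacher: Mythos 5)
Your argument is correct, and it shares the paper's geometric core: parametrize the segment $v_{t}=Px+t(x-Px)$, use convexity of $C$ to keep $v_{t}$ in $C$ for $t\in(0,1]$, and use sunniness plus $Q$-nonexpansiveness to obtain $q(y-Px)=q(Py-Pv_{t})\leq q(y-v_{t})$. Where you diverge is the final extraction step. The paper reads that inequality as saying $Px$ is a nearest point to $y$ on the segment and invokes Corollary \ref{ircbem} (the nearest-point characterization) to produce the functional; you instead rewrite $y-v_{t}=(y-Px)+t(Px-x)$ and invoke Lemma \ref{jcfiol} ((a)$\Rightarrow$(b)), exactly as in the converse half of Theorem \ref{sunny}. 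The two lemmas are near-equivalent variational statements, so the routes are close, but yours buys two things. First, you treat the degenerate case $q(y-Px)=0$ by hand via $q^{*}(J_{q}(y-Px))=0$, whereas Corollary \ref{ircbem} carries the nondegeneracy hypothesis $(E-\{x\})\cap\{y\in E:q(y)=0\}=\{0\}$, which is not among the hypotheses of Corollary \ref{trsfpl} and which the paper's proof never verifies. Second, you flag explicitly that condition (a) of Lemma \ref{jcfiol} is only available for $t\in(0,1]$ and justify why that suffices (the limit-point extraction in its proof only uses $t\to 0^{+}$), a gap the paper leaves tacit both here and in the converse direction of Theorem \ref{sunny}. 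Your sign bookkeeping ($\langle z-x,\,j_{q}\rangle\geq 0$ yielding $\langle x-Px,\,J_{q}(y-Px)\rangle\leq 0$ by single-valuedness) checks out.
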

\begin{proof}
 Suppose  $x \in C$ and $y \in C_{0}$. Set $x_{t}=Px+t(x-Px)$ for each $0\leq t\leq 1$. Then  we have $x_{t} \in C$ and $q(y-Px)=q(Py-Px_{t})\leq q(y-x_{t})$ for each $q \in Q$. By Corollary \ref{ircbem} we have
 \begin{equation*}
     \langle  Px- x_{t} , J_{q}(y - Px) \rangle \leq 0,
 \end{equation*}
 hence,
 \begin{equation*}
     \langle  x-Px , J_{q}(y - Px) \rangle \leq 0.
 \end{equation*}
\end{proof}
\begin{thm}\label{continous}
  Let $E$ be a locally convex space and $J_{q} : E \rightarrow E^{*}$  a single-valued
duality mapping. Then $J_{q}$ is  continuous  from $\tau_{Q}$ to weak$^{*}$ topology.
\end{thm}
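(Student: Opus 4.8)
The plan is to prove net continuity directly. I would take an arbitrary net $\{x_{\alpha}\}$ in $E$ with $x_{\alpha}\to x$ in $\tau_{Q}$ (equivalently $q(x_{\alpha}-x)\to 0$ for every $q\in Q$) and show $J_{q}x_{\alpha}\to J_{q}x$ in the weak$^{*}$ topology, i.e.\ $\langle y, J_{q}x_{\alpha}\rangle\to\langle y, J_{q}x\rangle$ for each $y\in E$. Write $g_{\alpha}:=J_{q}x_{\alpha}$. The first observation is that, by the very definition of the $q$-duality mapping, $q^{*}(g_{\alpha})=q(x_{\alpha})$, and since $q(x_{\alpha})\to q(x)$ the net $\{q^{*}(g_{\alpha})\}$ is bounded; fix $M>q(x)$ so that $q^{*}(g_{\alpha})\leq M$ for all $\alpha$ beyond some index. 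With $U:=\{z\in E: q(z)\leq 1\}$ a neighborhood of zero, the Banach--Alaoglu theorem (Theorem 3.26 in \cite{sob}) gives that $MU^{\circ}$ is weak$^{*}$-compact, and the tail of $\{g_{\alpha}\}$ lies in $MU^{\circ}$.

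The heart of the argument is to show that $J_{q}x$ is the only weak$^{*}$-cluster point of $\{g_{\alpha}\}$; compactness of $MU^{\circ}$ then forces the whole net to converge to it. So I would let $g$ be any weak$^{*}$-cluster point, pick a subnet $g_{\alpha_{\beta}}\to g$ weak$^{*}$, and verify the two defining relations of $J_{q}x$. For the pairing with $x$, I decompose $\langle x, g_{\alpha_{\beta}}\rangle=\langle x_{\alpha_{\beta}}, g_{\alpha_{\beta}}\rangle+\langle x-x_{\alpha_{\beta}}, g_{\alpha_{\beta}}\rangle$; the first term equals $q(x_{\alpha_{\beta}})^{2}\to q(x)^{2}$, while the second is dominated in absolute value by $q(x-x_{\alpha_{\beta}})\,q^{*}(g_{\alpha_{\beta}})\leq M\,q(x-x_{\alpha_{\beta}})\to 0$, using the basic inequality $|\langle z, f\rangle|\leq q(z)q^{*}(f)$ noted before Theorem \ref{hahn1}. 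Passing to the limit (the left side tends to $\langle x, g\rangle$ by weak$^{*}$ convergence) yields $\langle x, g\rangle=q(x)^{2}$.

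It remains to identify $q^{*}(g)$. Since $q^{*}(g)=\sup\{|\langle y, g\rangle|: q(y)\leq 1\}$ is a supremum of weak$^{*}$-continuous functionals, $q^{*}$ is weak$^{*}$ lower semicontinuous, whence $q^{*}(g)\leq\liminf_{\beta}q^{*}(g_{\alpha_{\beta}})=\lim_{\beta}q(x_{\alpha_{\beta}})=q(x)$. Conversely, when $q(x)\neq 0$ the relation $q(x)^{2}=\langle x, g\rangle\leq q(x)q^{*}(g)$ gives $q(x)\leq q^{*}(g)$, so $q^{*}(g)=q(x)$; when $q(x)=0$ the upper bound already forces $q^{*}(g)=0=q(x)$. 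In either case $\langle x, g\rangle=q(x)^{2}=q^{*}(g)^{2}$, so $g\in J_{q}x$, and since $J_{q}$ is single-valued, $g=J_{q}x$. Thus every cluster point of the tail net in the compact set $MU^{\circ}$ equals $J_{q}x$, and a net that is eventually in a compact set with a unique cluster point converges to it, so $g_{\alpha}\to J_{q}x$ weak$^{*}$, proving continuity from $\tau_{Q}$ to the weak$^{*}$ topology.

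I expect the main obstacle to be the final topological step: arguing cleanly that uniqueness of the weak$^{*}$-cluster point together with eventual membership in the compact polar $MU^{\circ}$ upgrades to genuine convergence of the original net (the standard ``compact set with a unique cluster point implies convergence'' lemma), since one must be careful that the net is only \emph{eventually} inside the compact set and that weak$^{*}$ convergence is tested against all $y\in E$. The remaining estimates are routine once the inequality $|\langle z, f\rangle|\leq q(z)q^{*}(f)$ and the weak$^{*}$ lower semicontinuity of $q^{*}$ are in hand.
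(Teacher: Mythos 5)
Your proposal is correct, and its core machinery is the same as the paper's: the identity $q^{*}(J_{q}x_{\alpha})=q(x_{\alpha})$ to get boundedness, the Banach--Alaoglu theorem applied to a polar $U^{\circ}$, weak$^{*}$ lower semicontinuity of $q^{*}$, and the decomposition $\langle x,g_{\alpha_{\beta}}\rangle=\langle x_{\alpha_{\beta}},g_{\alpha_{\beta}}\rangle+\langle x-x_{\alpha_{\beta}},g_{\alpha_{\beta}}\rangle$ to identify the limit as an element of $J_{q}x$. There are, however, two organizational differences worth noting, both in your favor. First, the paper splits into the cases $q(x)=0$ and $q(x)\neq 0$ and, in the second case, normalizes by constants $M_{q,y}$ that ostensibly depend on the test vector $y$; your single uniform bound $M>q(x)$ on the tail handles both cases at once and avoids that awkward dependence (your observation that $q^{*}(g)\leq q(x)=0$ already forces $g\in J_{q}x$ when $q(x)=0$ is what makes the unification work). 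Second, and more substantively, the paper's argument stops after producing \emph{one} subnet whose weak$^{*}$ limit equals $J_{q}x$; it never upgrades this to convergence of the whole net. Your final step --- every weak$^{*}$-cluster point of the tail lies in the compact set $MU^{\circ}$ and must equal $J_{q}x$ by single-valuedness, and a net eventually contained in a compact Hausdorff set with a unique cluster point converges to it --- is exactly the missing piece, and the weak$^{*}$ topology is indeed Hausdorff, so the lemma applies. The estimates you call routine are in fact routine (the inequality $\lvert\langle z,f\rangle\rvert\leq q(z)q^{*}(f)$ also holds when $q(z)=0$ provided $q^{*}(f)<\infty$, since then $f$ vanishes on the null set of $q$), so I see no gap in your version; if anything it is the more complete proof.
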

\begin{proof}
We show that if $x_{\alpha} \rightarrow x$  in $\tau_{Q}$  then  $J_{q} x_{\alpha} \rightarrow J_{q} x$ in the weak$^{*}$ topology.

 First, assume that $q(x)=0$, then $J_{q} x=0$. We need to show that $J_{q} x_{\alpha} \rightarrow 0$ in the weak$^{*}$ topology.
But,  we have $ q(x_{\alpha}) = q^{*}(J_{q} x_{\alpha})$, and since from Theorem 1.4 in  \cite{barbu} each $q \in Q$ is continuous, we have
$  \displaystyle 0=\lim_{\alpha}q(x_{\alpha}) = \lim_{\alpha} q^{*}(J_{q} x_{\alpha})$, hence,  $\displaystyle\lim_{\alpha} q^{*}(J_{q} x_{\alpha})=0$, therefore, $\displaystyle\lim_{\alpha} \langle y , J_{q} x_{\alpha} \rangle=0$, for every $y \in E$ that $q(y)\leq 1$. Then  for every $y \in E$,  $\displaystyle\lim_{\alpha} \langle y , J_{q} x_{\alpha} \rangle=0$. Indeed, if $q(y)> 1$, then $q(\frac{y}{q(y)})\leq 1$, therefore
$\displaystyle\lim_{\alpha} \langle \frac{y}{q(y)} , J_{q} x_{\alpha} \rangle=0$, then    for every $y \in E$,  $\displaystyle\lim_{\alpha} \langle y , J_{q} x_{\alpha} \rangle=0$. Hence  $J_{q} x_{\alpha} \rightarrow 0$ in the weak$^{*}$ topology.

Second,  assume that $q(x) \neq 0$.
 Set $f_{\alpha}^{q} := J_{q}x_{\alpha}$.
  Then
\begin{align*}
  \langle x_{\alpha}, f_{\alpha}^{q} \rangle = q( x_{\alpha})q^{*}(f_{\alpha}^{q}), \quad q(x_{\alpha}) = q^{*}(f_{\alpha}^{q}).
\end{align*}
Because $(x_{\alpha})$ is convergent in $\tau_{Q}$, $({x_{\alpha}})$ is bounded with respect to $\tau_{Q}$, then we can conclude that  ${f_{\alpha}^{q}}$ is bounded in $E^{*}$.  Indeed, since  $q^{*}({f_{\alpha}^{q}})=q(x_{\alpha})$ and    we know from Definition  1.1 in \cite{conway}  that $E^{*}$ is a l.c.s  that    the seminorms $\{p_{y}: y \in E\}$  which $p_{y}(x^{*})=|\langle y, x^{*}\rangle|$, define   the weak$^{*}$ topology on it, hence,
\begin{equation}\label{oiuvcx}
p_{y}({f_{\alpha}^{q}})=|\langle y , f_{\alpha}^{q} \rangle| \leq q(y)q^{*}({f_{\alpha}^{q}})=q(y)q(x_{\alpha})
\end{equation}
when  $q(y)\neq 0$, and in other words, when   $q(y)= 0$, from the definition of $J_{q}x_{\alpha} $ and $q^{*}({f_{\alpha}^{q}})$ we have that
\begin{equation}\label{npen}
|\langle y , f_{\alpha}^{q} \rangle| \leq  q^{*}({f_{\alpha}^{q}})=q(x_{\alpha})
\end{equation}
 hence, from \eqref{oiuvcx} and \eqref{npen} and from the boundedness concept in locally convex spaces,    page 3 in \cite{barbu}, and since $({x_{\alpha}})$ is bounded then $({f_{\alpha}^{q}})_{\alpha}$  is bounded in $E^{*}$ for each $q \in Q$.  From \eqref{oiuvcx} and \eqref{npen}, we can select an upper bound $M_{q, y} \neq 0$ related to  each $q \in Q$ and $y \in E$ for  $({f_{\alpha}^{q}})_{\alpha}$ in  the weak$^{*}$ topology. Putting $U_{q}=\{z \in E:q(z)< 1 \}$, we have $({\frac{1}{M_{q, y}}f_{\alpha}^{q}})_{\alpha} \subset U_{q}^{\circ}$ for  each $q \in Q$ and $y \in E$   where $U_{q}^{\circ}$ is the polar of $U_{q}$. Then  from Theorem 3.26 (Banach-Alaoglu) in \cite{sob}, there exists a subnet
$(\frac{1}{M_{q, y}}f_{\alpha_{\beta}}^{q})_{\beta}$ of $(\frac{1}{M_{q, y}}f_{\alpha})_{\alpha}$ such that $\frac{1}{M_{q, y}}f_{\alpha_{\beta}}^{q}\rightarrow f \in  U_{q}^{\circ}$  in the weak$^{*}$ topology.

We know that the function  $q^{*}$ on $E^{*}$ is lower semicontinuous in weak$^{*}$ topology. Indeed,  if $g_{\beta} \in \{f \in E^{*}:q^{*}(f)\leq \alpha \}$ such that $g_{\beta} \rightarrow g$ in the weak$^{*}$ topology, then equivalently,   from Definition  1.1 in \cite{conway} and   page 3 in \cite{barbu}, $p_{y}(g_{\beta}-g)\rightarrow 0$  for each   $y \in E$, hence
 $|\langle y ,  g_{\beta}-g\rangle|\rightarrow 0$,  therefore, $ \langle y ,  g_{\beta}\rangle \rightarrow \langle y , g\rangle$, hence $q^{*}(g)\leq \alpha$ i.e, $g \in \{f \in E^{*}:q^{*}(f)\leq \alpha \}$     then from Proposition 2.5.2 in \cite{Ag},  $q^{*}$  is lower semicontinuous in  the weak$^{*}$ topology on $E^{*}$. Therefore, we have
\begin{equation}\label{wsaml}
  q^{*}(f) \leq \liminf _{\beta}q^{*}(\frac{1}{M_{q, y}}f_{\alpha_{\beta}}^{q}) = \frac{1}{M_{q, y}} \liminf _{\beta} q(x_{\alpha_{\beta}}) = \frac{1}{M_{q, y}}q(x).
\end{equation}
Because $\langle x, M_{q, y}f - f_{\alpha_{\beta}}^{q} \rangle \rightarrow 0$ and $\langle x - x_{\alpha_{\beta}}, f_{\alpha_{\beta}}^{q} \rangle\rightarrow 0$, indeed from pages 125 and 126 \cite{conway}, the weak topology   on a l.c.s is coarser than the original    topology, hence,   from the fact that  $x_{\alpha}\rightarrow x$ in the original    topology, therefore  $x_{\alpha}\rightarrow x$ in the weak topology. Now, we have
\begin{align*}
  |\langle x, M_{q, y}f \rangle -q(x_{\alpha_{\beta}})^{2}| &= |\langle x, M_{q, y}f \rangle - \langle x_{\alpha_{\beta}}, f_{\alpha_{\beta}}^{q}
\rangle| \\
&\leq |\langle x, M_{q, y}f - f_{\alpha_{\beta}}^{q}\rangle| + |\langle x - x_{\alpha_{\beta}} , f_{\alpha_{\beta}}^{q}\rangle | \rightarrow 0,
\end{align*}
and since from Theorem 1.4 in  \cite{barbu} each $q \in Q$ is continuous, we have
\begin{equation*}
  \langle x, M_{q, y}f\rangle = q(x)^{2}.
\end{equation*}
Since $q(x) \neq 0$, we have
\begin{equation*}
  q(x)^{2} = \langle x, M_{q, y}f \rangle \leq  q^{*}(M_{q, y} f)q(x).
\end{equation*}
Thus, using \eqref{wsaml}, we have $\langle x, M_{q, y} f\rangle = q(x)^{2}, q(x) = q^{*}(M_{q, y}f)$. Therefore, $M_{q, y}f = J_{q}x$.

\end{proof}
In the next theorem,  we prove an  existence theorem of a sunny $Q$-nonexpansive retract.
\begin{thm}\label{lmt}
 Suppose that  $Q$ is a   family of   seminorms on a   real separated  and   complete   locally
convex space $E$ which determines the topology of $E$.  Let $ (C-C) \cap \{ x, q(x)=0\}= \{0\}$,  for each $q \in Q$.   Suppose that $C$ is a nonempty closed convex and bounded  subset of $E$ such that every sequence
in $C$ has a convergent subsequence.  Let   $T$ be a        $Q$-nonexpansive mapping  such that $  \rm{Fix}(T)\neq \emptyset$.  Let $J_{q}: E \rightarrow E^{*}$ be single valued  for every $q \in Q$.  Then $\rm{ Fix}(T) $ is a
sunny $Q$-nonexpansive retract of $C$ and the sunny $Q$-nonexpansive retraction of $C$ onto $\rm{ Fix}(T) $ is unique.
\end{thm}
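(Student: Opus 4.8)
\section*{Proof proposal}

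The plan is to construct $P$ explicitly by an implicit approximation and then to verify the characterizing inequality \eqref{hFHFL}, so that Theorem \ref{sunny} applies. Fix $u \in C$ and, for $\epsilon \in (0,1)$, set $T_{\epsilon} x := \epsilon u + (1-\epsilon) Tx$. Since $C$ is convex and $u, Tx \in C$, the map $T_{\epsilon}$ sends $C$ into $C$, and the $Q$-nonexpansiveness of $T$ gives $q(T_{\epsilon} x - T_{\epsilon} y) = (1-\epsilon)\, q(Tx - Ty) \leq (1-\epsilon)\, q(x - y)$ for all $q \in Q$, so $T_{\epsilon}$ is a $Q$-contraction with constant $1-\epsilon$. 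Running the argument of Theorem \ref{contraction} on the closed (hence complete) separated set $C$ --- the Picard iterates $T_{\epsilon}^{n} u$ remain in $C$ and, by Lemma \ref{qupozm}, converge to a point of $C$ --- produces a unique $z_{\epsilon} \in C$ with $z_{\epsilon} = \epsilon u + (1-\epsilon)\, T z_{\epsilon}$.

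First I would show that $z_{\epsilon}$ is an approximate fixed point of $T$. From the defining equation, $z_{\epsilon} - T z_{\epsilon} = \epsilon(u - T z_{\epsilon})$, so $q(z_{\epsilon} - T z_{\epsilon}) = \epsilon\, q(u - T z_{\epsilon})$, which tends to $0$ for each $q \in Q$ as $\epsilon \to 0$ because $C$ is bounded. Taking any sequence $\epsilon_{n} \to 0$, the hypothesis that every sequence in $C$ has a convergent subsequence produces $z_{\epsilon_{n_{k}}} \to v \in C$; continuity of $T$ together with $q(z_{\epsilon_{n_{k}}} - T z_{\epsilon_{n_{k}}}) \to 0$ forces $q(v - Tv) = 0$ for all $q$, whence $v \in \mathrm{Fix}(T)$ by separatedness.

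The heart of the argument is to identify $v$ through a variational inequality. For $y \in \mathrm{Fix}(T)$ I would pair the decomposition $z_{\epsilon} - y = \epsilon(u - y) + (1-\epsilon)(T z_{\epsilon} - Ty)$ with $J_{q}(z_{\epsilon} - y)$ and use $\langle T z_{\epsilon} - Ty, J_{q}(z_{\epsilon} - y)\rangle \leq q(T z_{\epsilon} - Ty)\, q(z_{\epsilon} - y) \leq q(z_{\epsilon} - y)^{2}$ to obtain, after cancelling $\epsilon$ and rewriting $u - y = (u - z_{\epsilon}) + (z_{\epsilon} - y)$, the inequality $\langle u - z_{\epsilon}, J_{q}(z_{\epsilon} - y)\rangle \geq 0$. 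Passing to the limit along the subsequence is the delicate point: I would write $\langle u - z_{\epsilon_{n_{k}}}, J_{q}(z_{\epsilon_{n_{k}}} - y)\rangle - \langle u - v, J_{q}(v - y)\rangle$ as the sum of $\langle u - v, J_{q}(z_{\epsilon_{n_{k}}} - y) - J_{q}(v - y)\rangle$, which vanishes by the weak$^{*}$ continuity of $J_{q}$ from Theorem \ref{continous}, and $\langle v - z_{\epsilon_{n_{k}}}, J_{q}(z_{\epsilon_{n_{k}}} - y)\rangle$, which is bounded by $q(v - z_{\epsilon_{n_{k}}})\, q^{*}(J_{q}(z_{\epsilon_{n_{k}}} - y)) = q(v - z_{\epsilon_{n_{k}}})\, q(z_{\epsilon_{n_{k}}} - y)$ and tends to $0$ because $C$ is bounded and $z_{\epsilon_{n_{k}}} \to v$. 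This yields $\langle u - v, J_{q}(v - y)\rangle \geq 0$, i.e.\ $\langle u - v, J_{q}(y - v)\rangle \leq 0$ for every $y \in \mathrm{Fix}(T)$ and every $q \in Q$, using $J_{q}(-x) = -J_{q}(x)$.

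Finally I would use this inequality both to pin down $v$ and to conclude. If $v, v'$ are limits of two subsequences, inserting $y = v'$ into the inequality for $v$ and $y = v$ into that for $v'$ and adding gives $q(v - v')^{2} \leq 0$, so $v = v'$ by separatedness; hence every sequence $\epsilon_{n} \to 0$ yields the same limit and $Pu := \lim_{\epsilon \to 0} z_{\epsilon} \in \mathrm{Fix}(T)$ is well defined. Taking $y = u$ when $u \in \mathrm{Fix}(T)$ forces $q(u - Pu)^{2} \leq 0$, so $Pu = u$ and $P$ is a retraction of $C$ onto $\mathrm{Fix}(T)$. The limiting inequality $\langle u - Pu, J_{q}(y - Pu)\rangle \leq 0$ is precisely \eqref{hFHFL} with $D = \mathrm{Fix}(T)$, so Theorem \ref{sunny} shows $P$ is sunny $Q$-nonexpansive. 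Uniqueness follows the same pattern: any sunny $Q$-nonexpansive retraction satisfies \eqref{hFHFL} by the converse in Theorem \ref{sunny} (note $\mathrm{Fix}(T) - \mathrm{Fix}(T) \subseteq C - C$, so the nondegeneracy hypothesis is inherited), and the two-retraction subtraction argument again gives $q(P_{1}u - P_{2}u)^{2} \leq 0$, forcing $P_{1} = P_{2}$. I expect the main obstacle to be the limit passage in the variational inequality, where the product of a $\tau_{Q}$-convergent sequence with a merely weak$^{*}$-convergent net of functionals must be handled via the uniform $q^{*}$-boundedness supplied by the boundedness of $C$ and the continuity of $J_{q}$.
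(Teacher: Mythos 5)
Your proposal is correct and follows essentially the same route as the paper's proof: the implicit approximants $z_{\epsilon}=\epsilon u+(1-\epsilon)Tz_{\epsilon}$ (the paper's $z_{n}$ with $\epsilon=\tfrac{1}{n}$) obtained from Theorem \ref{contraction}, the approximate fixed point property, the same variational inequality $\langle u-z_{\epsilon}, J_{q}(z_{\epsilon}-y)\rangle\geq 0$, the limit passage via the weak$^{*}$ continuity of $J_{q}$ (Theorem \ref{continous}) together with boundedness of $C$, the two-subsequence argument to make $P$ well defined, and Theorem \ref{sunny} to conclude sunniness and $Q$-nonexpansiveness. The only cosmetic difference is that for uniqueness of the retraction the paper invokes Corollary \ref{trsfpl} instead of the converse direction of Theorem \ref{sunny} (whose extra nondegeneracy hypothesis you correctly check is inherited).
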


\begin{proof}

  By theorem \ref{contraction} as in the proof of step 1 in Theorem \ref{g1} that we will prove, we put
a sequence  $\{z_{n}\}$ in $C$ as follows:
\begin{align}\label{zn1l}
      z_{n}=\frac{1}{n}x+(1- \frac{1}{n})Tz_{n}\quad ( n \in \mathbb{N}),
\end{align}
where,  $x \in C$ is fixed.
 Then  we have
 \begin{align}\label{gab}
  \lim_{n\rightarrow \infty}q(z_{n}-Tz_{n})=0.
 \end{align}
for each  $q \in Q$. Because, from the fact that $C$ is   bounded and $T$ is $Q$-nonexpansive, we have
\begin{align*}
\lim_{n} q(Tz_{n}-z_{n}) &= \lim_{n} q(Tz_{n}-  \frac{1}{n}x-(1- \frac{1}{n})Tz_{n})\\ &= \lim_{n}\frac{1}{n} q(  x - Tz_{n})=0,
\end{align*}
for each $q \in Q$.

Next, we show that the sequence $\{z_{n}\}$   converges    to an element of $\rm{Fix}(T) $. In the other words,  we   show that the    limit set of $\{z_{n}\}$ (denoted by  $\mathfrak{S}\{z_{n}\}  $) is a subset of $\rm{Fix}(T)$.
For each
$z \in  \rm{Fix}(T)$, $ n \in N$ and $q \in Q$, we have
\begin{align*}
 \langle  z_{n}-x ,J_{q}(z_{n}-z)\rangle    \leq 0.
\end{align*}
Indeed, we have for each $z \in \rm{Fix}(T)$,
\begin{align*}
 \langle  z_{n}-x , J_{q}(z_{n}-z)\rangle  =& \langle \frac{1}{n}x+(1- \frac{1}{n})Tz_{n} -x , J_{q}(z_{n}-z)\rangle \\  =& (n-1)\langle  Tz_{n}  - z_{n} , J_{q}(z_{n}-z)\rangle \\ =& (n-1)\langle  Tz_{n}  - Tz , J_{q}(z_{n}-z)\rangle \\&+ (n-1)\langle z  - z_{n} , J_{q}(z_{n}-z)\rangle \\ \leq & (n-1) (q( Tz_{n}  - Tz)q(z_{n}-z)-q(z_{n}-z)^{2})\\  \leq &(n-1) (q(z_{n}-z)^{2}-q(z_{n}-z)^{2})=0.
\end{align*}
Furthermore, we have, for each   $q \in Q$,
$\displaystyle \lim_{n} q(Tz_{n}-z_{n})=0$. Because  from the fact that  $C$ is bounded and $T$ is $Q$-nonexpansive,  we have that $ \{x  - Tz_{n}\}$ is bounded then
\begin{align}\label{hbclak}
\lim_{n} q(Tz_{n}-z_{n}) &= \lim_{n} q(Tz_{n}-  \frac{1}{n}x-(1- \frac{1}{n})Tz_{n})\nonumber\\ &= \lim_{n} \frac{1}{n} q(   x - Tz_{n})=0
\end{align}
for each $q \in Q$.
From our assumption, $ \{z_{n}\} $ has a subsequence converges to a point in $C$.
Let $\{z_{n_{i}}\}$ and $\{z_{n_{j}}\}$ be subsequences of $ \{z_{n}\} $ such that $\{z_{n_{i}}\}$ and $\{z_{n_{j}}\}$ converge
  to $y$ and $z$, respectively.  Therefore,     $y,z \in \rm{Fix}(T)$.  Because from \eqref{hbclak},  for each   $q \in Q$, we have
  \begin{align*}
 q(y-Ty)&\leq  q( y -z_{n_{i}})+ q(z_{n_{i}}-T z_{n_{i}})+ q(T z_{n_{i}} -Ty)\\& \leq 2q( y -z_{n_{i}})+ q(z_{n_{i}}-T z_{n_{i}}).
\end{align*}
Taking limit, since $E$ is separated, we have $y  \in \rm{Fix}(T)$ and similarly $z \in \rm{Fix}(T)$. Further, we have
 \begin{align}\label{la2}
 \langle  y-x , J_{q}(y-z)\rangle= \lim _{i \rightarrow \infty }\langle  z_{n_{i}}-x ,  J_{q}(z_{n_{i}}-z)\rangle   \leq 0.
\end{align}
 Indeed, from the fact that $J_{q}$ is single valued and  since  from Theorem \ref{continous}, $J_{q}$ is  continuous  from $\tau_{Q}$ to weak$^{*}$ topology, we have
 \begin{align*}
  | \langle  z_{n_{i}}-x\,&,\,J_{q}(z_{n_{i}}-z)\rangle  -\langle  y-x\,,\,J_{q}(y-z)\rangle | \nonumber \\=&  | \langle  z_{n_{i}}-x\,,\,J_{q}(z_{n_{i}}-z)\rangle -\langle  y-x\,,\,J_{q}(z_{n_{i}}-z)\rangle \nonumber \\&+ \langle  y-x\,,\,J_{q}(z_{n_{i}}-z)\rangle - \langle y-x\,,\,J_{q}(y-z)\rangle | \nonumber \\ \leq & | \langle  z_{n_{i}}-y\,,\,J_{q}(z_{n_{i}}-z)\rangle| + |\langle  y-x\,,\,J_{q}(z_{n_{i}}-z) -J_{q}(y-z)\rangle | \nonumber \\ \leq & q(  z_{n_{i}}-y ) q((z_{n_{i}}-z)) + |\langle  y-x , J_{q}(z_{n_{i}}-z) -J_{q}(y-z)\rangle | \nonumber \\ \leq & q(  z_{n_{i}}-y ) M_{q} + |\langle  y-x\,,\,J_{q}(z_{n_{i}}-z) -J_{q}(y-z)\rangle |,
 \end{align*}
 where $M_{q}$ is an upper bound for $\{z_{n_{i}}-z\}_{i \in \mathbb{N}}$  for each   $q \in Q$. Hence,  for each   $q \in Q$, we have
  \begin{equation*}
    \langle  y-x , J_{q}(y-z)\rangle = \lim_{i\rightarrow \infty}  \langle  z_{n_{i}}-x , J_{q}(z_{n_{i}}-z)\rangle  \leq 0.
  \end{equation*}
Similarly  $ \langle  z-x , J_{q}(z-y)\rangle  \leq 0$  and therefore $y = z$. Indeed, since  $J_{q}(y-z)=-J_{q}(y-z)$ we have
 \begin{align*}
   q( y-z ) ^{2}&=  \langle  y-z ,J_{q}(y-z)\rangle = \langle  y-x , J_{q}(y-z)\rangle +  \langle  x-z , J_{q}(y-z)\rangle \\ =& \langle  y-x , J_{q}(y-z)\rangle +  \langle  z-x , J_{q}(z-y)\rangle  \leq 0,
 \end{align*}
then  $q( y-z )=0$, for each $q \in Q$, and since $E$ is      separated,    $y = z$.  Thus, $ \{z_{n}\} $   converges   to an element of  $ \rm{Fix}(T)$.

 Therefore, a mapping $P$ of $C$ into itself can be defined by $Px =  \displaystyle\lim_{n}z_{n}$.   Then  we have, for each $ z \in  \rm{Fix}(T) $,
\begin{align}\label{zxyudmi}
\langle  x-Px\,,\,J_{q}(z-Px)\rangle = \lim_{n\rightarrow\infty}\langle z_{n}- x\,,\,J_{q}(z_{n}-z)\rangle   \leq 0.
\end{align}
It follows from   Theorem \ref{sunny} that $P$ is a sunny $Q$-nonexpansive retraction of $C $ onto $\rm{Fix}(T) $.  Suppose  $R$ is another sunny $Q$-nonexpansive retraction of $C$ onto $\rm{Fix}(T)$. Then, from Corollary \ref{trsfpl}, we have, for each $x \in C$
and $z \in \rm{Fix}(T)$,
\begin{align}\label{klbwf}
 \langle x - Rx , J_{q}(z - Rx)\rangle \leq 0.
\end{align}
Putting $z = Rx$ in \eqref{zxyudmi} and $z = Px$ in \eqref{klbwf}, we have
$ \langle  x - Px , J_{q}(Rx -Px) \rangle\leq   0$ and $\langle  x - Rx , J _{q}(Px - Rx) \rangle \leq 0$
and hence $\langle Rx - Px , J _{q}(Rx-Px)\rangle \leq   0$.  Then we have $q^{2}(Rx - Px)\leq0$  for each $q \in Q$ and since $E$ is separated, this implies $Rx = Px$. This completes the proof.
\end{proof}

\begin{prop}\label{pqbnty}
 Suppose that  $Q$ is a   family of   seminorms on a  separated       locally
convex space $E$ which determines the topology of $E$.   Then
\begin{equation*}
  q(x)^{2}- q(y)^{2} \geq 2\langle x - y, j \rangle
\end{equation*}
    for all $x, y \in E$ and $j \in J_{q}y$ such that $q(y) \neq 0$.
\end{prop}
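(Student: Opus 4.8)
The plan is to reproduce, at the level of a single seminorm $q$, the classical subdifferential inequality $\|x\|^{2}\ge\|y\|^{2}+2\langle x-y,j\rangle$ valid for $j$ in the normalized duality map; its proof uses only a pairing estimate followed by completing a square, and both ingredients survive the passage from the norm to the gauge pair $(q,q^{*})$.

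First I would unwind the definition of $J_{q}$. Since $j\in J_{q}y$, we have $\langle y,j\rangle=q(y)^{2}=q^{*}(j)^{2}$, so in particular $q^{*}(j)=q(y)$. This lets me rewrite
\[
  2\langle x-y,j\rangle=2\langle x,j\rangle-2\langle y,j\rangle=2\langle x,j\rangle-2q(y)^{2}.
\]
Next I would estimate $\langle x,j\rangle$ by the pairing inequality $|\langle x,j\rangle|\le q(x)\,q^{*}(j)$ recorded at the beginning of Section 3; using $q^{*}(j)=q(y)$ this yields $\langle x,j\rangle\le q(x)q(y)$, whence
\[
  2\langle x-y,j\rangle\le 2q(x)q(y)-2q(y)^{2}.
\]
The claim $q(x)^{2}-q(y)^{2}\ge 2q(x)q(y)-2q(y)^{2}$ is, after transposing, exactly $(q(x)-q(y))^{2}\ge 0$, which is automatic; this finishes the argument.

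The one place deserving care---rather than the trivial square-completion---is the pairing bound when $q(x)=0$, since the excerpt records $|\langle x,j\rangle|\le q(x)q^{*}(j)$ explicitly only for $q(x)\neq0$, while here $x$ is arbitrary. In that degenerate case I would note that $q(tx)=0\le1$ for every scalar $t$, so $|t|\,|\langle x,j\rangle|\le q^{*}(j)=q(y)<\infty$ for all $t$; letting $|t|\to\infty$ forces $\langle x,j\rangle=0=q(x)q^{*}(j)$, so the bound persists and the chain above goes through unchanged. I would also remark that separatedness of $E$ plays no role in the numerical inequality itself---it is only the standing hypothesis guaranteeing that $J_{q}$ and $q^{*}$ behave as expected---so I would not invoke it in the estimate.
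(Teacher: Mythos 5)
Your proof is correct and follows essentially the same route as the paper's: use $\langle y,j\rangle=q(y)^{2}$ and $q^{*}(j)=q(y)$, bound $\langle x,j\rangle\le q(x)q^{*}(j)=q(x)q(y)$, and complete the square to reduce everything to $(q(x)-q(y))^{2}\ge 0$. Your extra care with the degenerate case $q(x)=0$ (where the paper only records the pairing bound for $q(x)\neq 0$) is a point the paper's own proof silently skips, but otherwise the two arguments coincide.
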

\begin{proof}
   Let $j \in  J_{q}x$, $x \in E$. Then
\begin{align*}
  q(y)^{2} - q(x)^{2} &- 2\langle  y - x , j \rangle \\&=  q(x)^{2} + q(y)^{2}- 2\langle  y   , j \rangle \\&\geq  q(x)^{2} + q(y)^{2}-  2q(x)q(y) \\&\geq  (q(x)  - q(y))^{2}\geq 0.
\end{align*}
\end{proof}
\begin{thm}\label{g1}
 Suppose that  $Q$ is a   family of   seminorms on a real separated   and complete     locally
convex space $E$ which determines the topology of $E$ and    $C$ be a nonempty closed  convex and bounded subset of  $E$ such that  every sequence in $C$ has a convergent subsequence.    Suppose that   $ T$ is a       $Q$-nonexpansive mapping from $C$ into itself such that $  \rm{Fix}(T)\neq \emptyset$.  Assume that $J_{q}$ is  single valued  for each $q \in Q$. Let $ (C-C) \cap \{ x, q(x)=0\}= \{0\}$ for each $q \in Q$.
    Suppose that $f$ is an  $Q$-contraction on $ C$. Let $\epsilon_{n}$ be a sequence in $(0, 1)$ such that $\displaystyle \lim_{n} \epsilon_{n}=0$.
  Then there exists a unique    $ x \in C $ and    sunny  $Q$-nonexpansive retraction $ P $ of $ C $ onto $  \rm{Fix}(T)$  such that the following  net  $\{z_{n}\}$   generated by
\begin{equation}\label{4}
    z_{n}=\epsilon_{n} fz_{n}+(1-\epsilon_{n})Tz_{n}\quad ( n \in  I),
\end{equation}
    converges to $Px$.
\end{thm}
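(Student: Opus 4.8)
The plan is to follow the viscosity-approximation strategy, drawing on three earlier results: the Banach $Q$-Contraction Principle (Theorem \ref{contraction}) for well-posedness of the scheme, the existence theorem (Theorem \ref{lmt}) for the retraction $P$, and the weak${}^*$-continuity of $J_q$ (Theorem \ref{continous}) for the passage to the limit. First I would show that for each $n$ equation \eqref{4} has a unique solution $z_n\in C$: the map $S_n:=\epsilon_n f+(1-\epsilon_n)T$ carries the convex set $C$ into itself, and for each $q\in Q$
\begin{equation*}
q(S_nu-S_nv)\leq\big(\epsilon_n\beta+(1-\epsilon_n)\big)q(u-v)=\big(1-\epsilon_n(1-\beta)\big)q(u-v),
\end{equation*}
so $S_n$ is a $Q$-contraction with constant in $(0,1)$; as $C$ is closed in the complete separated space $E$ it is itself complete and separated, and Theorem \ref{contraction} applies. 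Next I would record that $q(z_n-Tz_n)=\epsilon_n\,q(fz_n-Tz_n)\to0$ for each $q\in Q$, boundedness of $C$ keeping $\{q(fz_n-Tz_n)\}$ bounded, and then invoke Theorem \ref{lmt} to obtain the unique sunny $Q$-nonexpansive retraction $P$ of $C$ onto $\mathrm{Fix}(T)$.

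The analytic core is a variational inequality. Expanding $q(z_n-p)^2=\langle z_n-p,J_q(z_n-p)\rangle$ along \eqref{4}, using $q^{*}(J_q(z_n-p))=q(z_n-p)$ together with $q(Tz_n-Tp)\leq q(z_n-p)$ for $p\in\mathrm{Fix}(T)$, and then dividing by $\epsilon_n>0$, I would arrive at
\begin{equation*}
\langle fz_n-z_n,J_q(z_n-p)\rangle\geq0\qquad(p\in\mathrm{Fix}(T),\ q\in Q).
\end{equation*}
Using the hypothesis that every sequence in $C$ has a convergent subsequence, choose $z_{n_i}\to y$; the bound $q(y-Ty)\leq 2q(y-z_{n_i})+q(z_{n_i}-Tz_{n_i})\to0$ and separatedness force $y\in\mathrm{Fix}(T)$. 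Letting $i\to\infty$ in the displayed inequality should give
\begin{equation*}
\langle fy-y,J_q(y-p)\rangle\geq0\qquad(p\in\mathrm{Fix}(T),\ q\in Q).
\end{equation*}

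The hard part is exactly this passage to the limit, and I would carry it out precisely as in the derivation of \eqref{la2} inside Theorem \ref{lmt}: split the difference $\langle fz_{n_i}-z_{n_i},J_q(z_{n_i}-p)\rangle-\langle fy-y,J_q(y-p)\rangle$ into a term dominated by $q\big((fz_{n_i}-z_{n_i})-(fy-y)\big)\,q(z_{n_i}-p)$, which vanishes since $f$ is $\tau_Q$-continuous and $C$ is bounded, and a term $\langle fy-y,J_q(z_{n_i}-p)-J_q(y-p)\rangle$, which vanishes by the weak${}^*$-continuity of $J_q$ from Theorem \ref{continous}. To close the argument I would establish uniqueness of the variational solution: if $y,y'$ both satisfy it, inserting $p=y'$ and $p=y$ and using $J_q(y'-y)=-J_q(y-y')$ yields $q(y-y')^2\leq\langle fy-fy',J_q(y-y')\rangle\leq\beta\,q(y-y')^2$, so $q(y-y')=0$ for all $q$ and $y=y'$ by separatedness. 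Hence all convergent subsequences share the limit $y$, and the whole net converges to $y$.

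Finally I would identify $y$ with $Px$ for the promised unique $x$. The composite $f\circ P$ is a $Q$-contraction on $C$ (since $P$ is $Q$-nonexpansive and $f$ is a $Q$-contraction), so by Theorem \ref{contraction} it has a unique fixed point $x\in C$, that is $x=f(Px)$. Writing $w:=Px$, we have $w=P(fw)$, and applying Corollary \ref{trsfpl} with the point $fw\in C$ gives $\langle fw-w,J_q(p-w)\rangle\leq0$, i.e. $\langle fw-w,J_q(w-p)\rangle\geq0$ for all $p\in\mathrm{Fix}(T)$. Thus $w$ satisfies the same variational inequality as $y$, so $w=y$ by the uniqueness just proved, and therefore $z_n\to Px$, completing the proof. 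I expect the limit passage in the third paragraph, resting on Theorem \ref{continous}, to be the only genuinely delicate step, the remaining pieces being direct applications of Theorems \ref{contraction}, \ref{lmt} and Corollary \ref{trsfpl}.
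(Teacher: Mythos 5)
Your proposal is correct, but its convergence argument takes a genuinely different route from the paper's. You share the opening steps with the paper (well-posedness of \eqref{4} via Theorem \ref{contraction}, $q(z_n-Tz_n)\to 0$ from boundedness of $C$, subsequential limits lying in $\mathrm{Fix}(T)$), and you obtain $P$ and $x$ the same way (Theorem \ref{lmt}, then the unique fixed point of $f\circ P$ from Theorem \ref{contraction}). The divergence is in the analytic core: the paper applies the subdifferential-type inequality of Proposition \ref{pqbnty} to $z_n-Px=\epsilon_n(fz_n-Px)+(1-\epsilon_n)(Tz_n-Px)$ to get the quantitative bound $q(z_n-Px)^2\le \frac{2}{1-\beta}\langle x-Px,\,J_q(z_n-Px)\rangle$, and then kills the right-hand side by showing $\limsup_n\langle x-Px,\,J_q(z_n-Px)\rangle\le 0$ (one subsequence that simultaneously realizes the limsup and converges, plus the sunny-retraction inequality). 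You instead derive the discrete variational inequality $\langle fz_n-z_n,\,J_q(z_n-p)\rangle\ge 0$ for all $p\in\mathrm{Fix}(T)$, pass to the limit along convergent subsequences using Theorem \ref{continous} exactly as in the derivation of \eqref{la2}, prove uniqueness of the limiting variational solution in $\mathrm{Fix}(T)$ via the $\beta$-contraction estimate, and identify that solution with $Px$ through Corollary \ref{trsfpl}. Both routes rest on the same pillars (Theorems \ref{contraction}, \ref{lmt}, \ref{continous} and the sequential compactness of $C$); the paper's buys an explicit inequality controlling $q(z_n-Px)^2$ for the whole sequence at once and needs no separate uniqueness argument, while yours isolates the variational characterization of the limit, $\langle f(Px)-Px,\,J_q(Px-p)\rangle\ge 0$ for all $p\in\mathrm{Fix}(T)$, together with its uniqueness, which makes the identification of the limit conceptually cleaner and dispenses with Proposition \ref{pqbnty} altogether. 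The only point worth spelling out in your write-up is the final descent from ``every convergent subsequence has limit $Px$'' to convergence of the whole sequence, which needs one more appeal to the hypothesis that every sequence in $C$ has a convergent subsequence.
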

\begin{proof}
Since $f$ is a  $Q$-contraction, there exists $0 \leq \beta< 1 $ such that $q(f(x)-f(y))\leq \beta q(x-y ) $ for each  $x,y \in E$ and $q \in Q$.
We  divide the proof into five   steps.

  Step 1.  The existence of $z_{n}$ which satisfies \eqref{4}.\\Proof. This follows immediately from the fact that for every $ n \in I$, the mapping $N_{n}$ given by
  \begin{align*}
 N_{n}x:=\epsilon_{n} fx+(1-\epsilon_{n}) Tx\qquad(x\in C),
\end{align*}
is a $Q$-contraction.  To see this, put \\$\beta_{n}=(1+\epsilon_{n}(  \beta-1) )$, then  $0 \leq\beta_{n} < 1 \; (n\in\mathbb{N})$. Then we have,
\begin{align*}
q(N_{n}x-N_{n}y)  \leq &\epsilon_{n}   q(fx-fy)+ \left(1-\epsilon_{n}\right)q(Tx-Ty) \\  \leq & \epsilon_{n}  \beta q(x-y)+(1-\epsilon_{n})q(x-y) \\ =&(1+\epsilon_{n}(  \beta-1) )q(x-y)=\beta_{n}q(x-y).
\end{align*}
Therefore, by Theorem \ref{contraction}, there exists a unique point $z_{n}\in C$ such that $N_{n}z_{n}=z_{n}$.

Step 2. $\displaystyle \lim_{n} q(Tz_{n}-z_{n})=0$ for each $q \in Q$. \\
Proof. Since $C$ is bounded, we have that $ \{f z_{n}  - Tz_{n}\}$ is bounded then
\begin{align*}
\lim_{n} q(Tz_{n}-z_{n}) &= \lim_{n} q(Tz_{n}-  \epsilon_{n} fz_{n}-(1-\epsilon_{n})Tz_{n})\\ &= \lim_{n} \epsilon_{n} q(   fz_{n} - Tz_{n})=0
\end{align*}
for each $q \in Q$.

 Step 3.   $\mathfrak{S}\{z_{n}\} \subset \rm{Fix(T) } $, where   $ \mathfrak{S}\{z_{n}\} $ denotes the set of   $\tau_{Q}$-limit points of subsequences of
 $ \{z_{n}\} $.\\
Proof. Let
 $ z\in \mathfrak{S}\{z_{n}\} $,  and let $\lbrace z_{n_{k}}\rbrace $
 be a subsequence of $ \lbrace z_{n}\rbrace $ such
that $ z_{n_{k}}\rightarrow z $. For each $q \in Q$, we have
\begin{align*}
   q( Tz-z) \leq&  q(Tz-Tz_{n_{k}}) + q(Tz_{n_{k}}-z_{n_{k}})+ q( z_{n_{k}}-z) \\ \leq & 2q( z_{n_{k}}-z)+ q(Tz_{n_{k}}-z_{n_{k}}),
\end{align*}
 then by Step 2,
\begin{align*}
   q(Tz-z) \leq 2\lim_{k}  q( z_{n_{k}}-z)+\lim_{j} q(Tz_{n_{k}}-z_{n_{k}})=0,
\end{align*}
for each $q \in Q$, hence $ q(Tz-z)=0$ for each $q \in Q$ and since $E$ is separated  we have   $ z\in \rm{Fix(T) }$.

  Step 4.   There exists a unique sunny $Q$-nonexpansive retraction $ P $  of $ C $ onto $  \rm{Fix}(T)$ and $ x \in C $ such that
\begin{align}\label{gbs}
  K  :=\limsup_{n}\langle x-Px ,J_{q}(z_{n}-Px)) \rangle\leq0.
\end{align}
Proof.  We know  from Theorem \ref{continous} that $J_{q}$ is  continuous  from $\tau_{Q}$ to weak$^{*}$ topology,  then by Theorem \ref{lmt}   there exists a unique sunny $Q$-nonexpansive retraction $ P $ of $ C $ onto
$ \rm{Fix(T)} $.  Theorem  \ref{contraction}  guarantees that   $fP$ has a unique fixed point $x\in C$.  We show that
\begin{equation*}
  K  :=\limsup_{n}\langle x-Px , J_{q}(z_{n}-Px)\rangle\leq0.
\end{equation*}
Note that, from the definition of
 $ K $  and our assumption  that every sequence in $C$ has a   convergent subsequence,  we can select a subsequence  $ \{z_{n_{k}}\}$
 of $ \{z_{n}\}$ with the following properties:\\
(i)  $\displaystyle\lim_{k}\langle x-Px\,,\,J_{q}(z_{n_{k}}-Px)\rangle=K$,\\
(ii) $\{z_{n_{k}}\}$ converges   to a point $z$;\\
by Step 3, we have $ z \in  \rm{Fix}(T) $.  From Theorem \ref{sunny} and since  $J_{q}$ is  continuous,   we have
\begin{align*}
K =& \lim_{j}\langle  x-Px , J_{q}(z_{n_{k}}-Px)\rangle=\langle  x-Px , J_{q}(z-Px)\rangle    \leq 0.
\end{align*}
 Since     $ fPx=x$, we have  $ ( f-I)Px=x-Px $. Now, from Proposition \ref{pqbnty} and our assumption we have, for each $n\in {I}  $,
\begin{align*}
\epsilon_{n}  (   \beta &-1)   q(  z_{n} -Px   ) ^{2} \\ \geq &
\Big [ \epsilon_{n}  \beta   q(  z_{n} -Px )+(1-\epsilon_{n}) q( z_{n}-Px ) \Big]^{2}-q( z_{n}-Px )^{2} \\  \geq &
\Big [ \epsilon_{n}   q(  fz_{n} -f(Px) )+(1-\epsilon_{n}) q(Tz_{n}-Px ) \Big]^{2}-q( z_{n}-Px )^{2} \\  \geq &
2\Big \langle \epsilon_{n}  \Big ( fz_{n}-f(Px)  \Big)\\ &+ (1-\epsilon_{n})(Tz_{n}-Px)-(z_{n}-Px)\, , \,J_{q}(z_{n}-Px) \Big \rangle \\ = & -2\epsilon_{n} \langle  (  f-I)Px\, , \,J_{q}(z_{n}-Px)\rangle  \\ = & -2\epsilon_{n} \langle x-Px\, , \,J_{q}(z_{n}-Px)\rangle ,
\end{align*}
hence,
\begin{align}\label{gbh}
 q( z_{n}-Px )^{2}\leq \frac{2}{1-  \beta }\langle x-Px\, , \,J_{q}(z_{n}-Px)\rangle.
\end{align}
for each $q \in Q$.

Step 5. $ \{z_{n}\}$    converges to $Px$ in $\tau_{Q}$.\\
Proof. Indeed, from \eqref{gbs}, \eqref{gbh} and that $ Px \in  \rm{Fix}(T)$, we conclude
\begin{align*}
    \limsup_{n}q(z_{n}-Px)^{2}\leq &  \frac{2}{1-  \beta}\limsup_{n}\langle x-Px \,,\,J_{q}(z_{n}-Px)\rangle\leq0,
\end{align*}
for each $q \in Q$. That is $z_{n} \rightarrow Px$  in $\tau_{Q}$.
\end{proof}
\section{Numerical example}
\example \label{g22221} Consider  Theorem \ref{g1}. Let $E=\mathbb{R}$ and $q(x)=|x|$ be the only seminorm on $\mathbb{R}$ i.e, $Q=\{|.| \}$.
 Let $C=[0,2]$ and  $T(x)=\frac{1}{2}x+1$ be a $Q$-nonexpansive mapping and $f(x)= \frac{1}{2}x$  be a $Q$-contraction mapping  on $[0,2]$ in  Theorem \ref{g1}.   We know that $P(x)=2$ is a  sunny  $Q$-nonexpansive  retraction from $[0,2]$ onto $\text{Fix}(T)=\{2\}$, then obviously, we have the sequence $\{z_{n}\}_{n \in \mathbb{N}}=\{2-\frac{2}{n}\}_{n \in \mathbb{N}}$ generated by \eqref{4}  converges to $P1=2$.

  \section{Discussion}
In this paper, we establish some  fixed point theorems  by defining the notation of the $q$-duality mappings in locally convex spaces. One can see that Theorem  \ref{g1} is a generalization  of Theorem 3.1 obtained in  \cite{soori2}, our Banach $Q$-Contraction Principle  is a generalization of the well known  Banach Contraction Principle, Theorem \ref{continous}  is one of the most  applicable theorems in this paper that has proved by using Theorem 3.26 (Banach-Alaoglu) in \cite{sob} and is a generalization of Theorem 2.6.10 in \cite{Ag} and  Corollaries \ref{sunny} and \ref{trsfpl} are  the generalizations of very well known   Propositions  2.10.20 and 2.10.21 in \cite{Ag}, in locally convex spaces.

 \section{Conclusion}
In this paper, first, we define the notation of   the $q$-duality mappings in locally convex spaces.     Then we prove   some    theorems and corollaries for examples  Banach contraction principle  for locally convex spaces that we called  Banach Q-Contraction Principle. Using this results, we prove an implicit scheme for locally convex spaces.
\section{ Abbreviations}
Not applicable
\section{Declarations }
\subsection{Availability of data and material}
Please contact author for data requests.
\subsection{Funding}
Not applicable
\section{  Acknowledgements}
 The  author is  grateful to  the University of Lorestan for their support.
 \section{Competing interests}
 The author declares that he has no competing interests.
 \section{Authors' contributions}
 All authors contributed equally to the manuscript, read and approved the final manuscript.

\bibliography{mmnsample}
\bibliographystyle{mmn}

\end{document}